\newtheorem{definition}{Definition}[section]
\newtheorem{theorem}{Theorem}[section]
\newtheorem{lemma}{Lemma}[section]
\newtheorem{corollary}{Corollary}[section]
\newtheorem{proposition}{Proposition}[section]
\newtheorem{remark}{Remark}[section]
\newcommand{\bpp}{\begin{proposition}}
\newcommand{\epp}{\end{proposition}}
\newcommand{\bt}{\begin{theorem}}
\newcommand{\et}{\end{theorem}}
\newcommand{\bl}{\begin{lemma}}
\newcommand{\el}{\end{lemma}}
\newcommand{\bd}{\begin{definition}}
\newcommand{\ed}{\end{definition}}
\newcommand{\bc}{\begin{corollary}}
\newcommand{\ec}{\end{corollary}}
\newcommand{\bp}{\begin{proof}}
\newcommand{\ep}{\end{proof}}
\newcommand{\bx}{\begin{example}}
\newcommand{\ex}{\end{example}}
\newcommand{\bi}{\begin{exercise}}
\newcommand{\ei}{\end{exercise}}
\newcommand{\bo}{\begin{prop}}
\newcommand{\eo}{\end{prop}}
\newcommand{\br}{\begin{remark}}
\newcommand{\er}{\end{remark}}
\newcommand{\be}{\begin{equation}}
\newcommand{\ee}{\end{equation}}
\newcommand{\ba}{\begin{align}}
\newcommand{\ea}{\end{align}}
\newcommand{\bn}{\begin{enumerate}}
\newcommand{\en}{\end{enumerate}}
\newcommand{\bg}{\begin{align*}}
\newcommand{\bcs}{\begin{cases}}
\newcommand{\ecs}{\end{cases}}
\newcommand{\bean}{\begin{eqnarray*}}
\newcommand{\eean}{\end{eqnarray*}}
\numberwithin{equation}{section}
\begin{document}

\title{\bf Soliton solutions for a class of quasilinear Schr\"{o}dinger equations with  a   parameter\thanks{Supported by NSFC (No.11201154,11371146), SRFDF(No.20120172120040) e-mails: scyjwang@scut.edu.cn.(Wang); maytshen@scut.edu.cn (Shen);
 }
\date{}
\author{Claudianor  O. Alves\thanks{Supported by CNPq/Brazil 303080/2009-4, e-mail:coalves@dme.ufcg.edu.br} \\
{\small \it Unidade Acad\^{e}mica de Matem\'{a}tica,}\\{ \small \it Universidade Federal de Campina Grande, }{\small \it58429-900, Campina Grande-PB-Brazil}\\
{  Youjun Wang \quad Yaotian Shen}\\ {\small\it Department of  Mathematics, South China University of
 Technology,}
\\{\small\it Guangzhou 510640, PR China}\\
}}

\maketitle
\begin{center}{\it   }\end{center}

\vskip0.36in

\begin{center}
\begin{minipage}{120mm}
{\bf Abstract }
 Using variational methods combined with perturbation arguments,  we study   the existence of nontrivial classical solution  for the quasilinear Schr\"{o}dinger equation
 \begin{equation*}\label{1.1}
 -\Delta u+ V(x)u+ \frac{\kappa}{2}[\Delta |u|^2]u=l(u),\ x\in\mathbb{R}^N,
 \end{equation*}
where  $V:\mathbb{R}^N\rightarrow \mathbb{R}$ and $l:\mathbb{R} \to \mathbb{R}$ are continuous function,  $\kappa$ is a parameter and  $N\geq 3$. This model has been proposed in  plasma physics and nonlinear optics. As a main novelty with respect to some
previous results, we are able to deal with the case $\kappa>0$.
\end{minipage}
\end{center}
{\small{\bf Keywords}\quad
Quasilinear Schr\"{o}dinger equations; Mountain pass theorem; Standing waves} \\
{\small{\bf  MSC}\quad 35B33; 35J20;
35J60; 35Q55} \vspace{0.5cm}

\vskip0.6in
\section{Introduction}	

In this paper we   consider the following
quasilinear Schr\"{o}dinger equations
\begin{equation}\label{00.1}
-\Delta u+ V(x)u+ \frac{\kappa}{2}[\Delta |u|^2]u=l(u),\ x\in\mathbb{R}^N,
\end{equation}
where  $V:\mathbb{R}^N\rightarrow \mathbb{R}$ and $l:\mathbb{R} \to \mathbb{R}$ are continuous function and $\kappa>0$ is a parameter.
Solutions of (\ref{00.1})  are
related to standing wave solutions for the following Schr\"{o}dinger equations:
\begin{equation}\label{1.2}
iz_t=-\Delta z+W(x) z-\rho(|z|^2)z+ \frac{\kappa}{2}[\Delta
|z|^2]z,\ \ x\in \mathbb{R}^N,
\end{equation}
where $W:\mathbb{R}^N\rightarrow \mathbb{R}$ is a
given potential and $\rho:\mathbb{R} \rightarrow \mathbb{R}$ is a real function. Quasilinear Schr\"{o}dinger equations  like (\ref{1.2}) play an important  role in various domains in physics. For $\rho(s)=as^{p}$, (\ref{1.2}) appears in various problems in plasma physics and nonlinear optics, e.g. oscillating soliton instabilities during microwave and laser heating of plasma \cite{Go-84,Po-78}. Moreover, (\ref{1.2}) is also the basic equation describing oscillations in a superfluid film when $\rho(s)=-\alpha-\frac{\beta}{(a+s)^3}$ \cite{Ku-81}.  We   refer
the readers to \cite{Br-La, Bo-Ha, Hasse, La-Po, Lit} for more details on the background.

In this paper, we restrict ourselves to two model cases $\rho(s)=s^{\frac{q-2}{2}}$ or  \linebreak $\rho(s)=-1+\frac{1}{(1+s)^3}$ and  we are interested in the existence of standing wave  solutions, i.e. solutions of the form
$z(t,x)=\exp(-iEt)u(x)$,
where $E\in \mathbb{R}$ and $u$ is a real function. Putting $z(t,x)=\exp(-iEt)u(x)$ into (\ref{1.2}),  we
are led to  the following  equations
\begin{equation}\label{1.1}
-\Delta u+ V(x)u+ \frac{\kappa}{2}[\Delta |u|^2]u=|u|^{q-2}u,\ x\in\mathbb{R}^N
\end{equation}
or \begin{equation}\label{1.1++}
-\Delta u+ V(x)u+ \frac{\kappa}{2}[\Delta |u|^2]u=\Big[1-\frac{1}{(1+|u|^2)^3}\Big]u,\ x\in\mathbb{R}^N
\end{equation}
with $V(x)=W(x)-E$.

For equation (\ref{1.1}),  semilinear case corresponding to $\kappa=0$
has been studied extensively in recent years, see e.g. \cite{Ber-Lions,Lions}.   When $\kappa<0$, this equation has been introduced in \cite{Br-Er-01, Br-Er-03, Ha-Za-03} to study a model of self-trapped electrons in
quadratic or hexagonal lattices  and
has attracted much attention.  For the subcritical case, i.e., $4<q<22^*$ in (\ref{1.1}), the first existence results  are, up to our knowledge,
due to Poppenberg, Schmitt and  Wang in \cite{Pop-Sch-Wang}. In \cite{Pop-Sch-Wang}, the main existence results are obtained, through
a constrained minimization argument.
Subsequently a general existence result for (\ref{1.1}) was derived in Liu, Wang and Wang \cite{Liu-3}. The idea in \cite{Liu-3} is to make  a change of variable and
reduce the quasilinear problem (\ref{1.1}) to semilinear one and an Orlicz
space framework was used to prove the existence of a positive
solutions
via Mountain pass theorem.
The same method of changing of variable
was also used by Colin and Jeanjean in  \cite{Colin-Jean}, but the usual Sobolev space $H^1(\mathbb{R}^N)$
framework was used as the working space. Precisely,  since the energy functional   associated
to  (\ref{1.1}) is not well defined in $H^1(\mathbb{R}^N)$,  they first make the changing of  unknown variables
$v=f^{-1}(u)$, where $f$ is defined by ODE:
\begin{equation}\label{0.01} f'(t)=\frac{1}{\sqrt{1-\kappa f^2(t)}},\quad  t\in [0,+\infty),\end{equation}
and
$f(t)=-f(-t),$ $ t\in (-\infty, 0].$ Then,  after the changing of variables, to find the solutions of (\ref{1.1}), it suffices to study the existence of solutions for the following semilinear equation
$$-\Delta v=\frac{1}{\sqrt{1- \kappa f^2(v)}}(-V(x)f(v)+|f(v)|^{q-2}f(v)),\quad x\in \mathbb{R}^N.$$
By using the classical
results given by Berestycki and  Lions \cite{Ber-Lions}, they proved the
existence of a spherically symmetric solution.  In \cite{Liu-Wang-Wang-2004},  the authors used a minimization on a Nehari-type constraint to get existence
results. Their argument does not depend on any change of variables, so it can be applied to treat
more general problems. By minimization under a convenient constraint,  Ruiz and Siciliano  in \cite{Ruiz-Si-2010} discussed the  existence of ground states for (\ref{1.1}) with $q\in (2,\frac{4N}{N-2})$. For  the critical case,
Silva and Vieira in \cite{Silva} established the existence  of solutions for asymptotically periodic quasilinear
Schr\"{o}dinger equations (\ref{1.1}) with the nonlinearity $|u|^{q-2}u$ replaced by $K(x)u^{2(2^*)-1}+ g(x,u)$. The existence of multiple solutions  were established in  \cite{Wang-Yang}.  We refer to \cite{do-09, Liu-2, Wang-Yang, Wang-Zou} for more results.

Recently, in  \cite{Shen-2013,  Wang-Yang-13}, the authors introduced the changing of  known variables
$s=G^{-1}(t)$ for $t\in [0,+\infty)$ and $G^{-1}(t)=-G^{-1}(-t)$ for $t\in (-\infty,0)$, where\begin{equation}\label{0.02}G(s)=\int_0^s \sqrt{1-\kappa t^2}dt.\end{equation}
Since $\kappa<0$, integral (\ref{0.02}) makes sense and  the inverse function $G^{-1}(t)$  exists. Then, using variational methods,  they established the existence of nontrivial solutions for (\ref{1.1}) with subcritical or critical growth.

The main purpose of the present paper is studying  the existence of nontrivial solutions for models (\ref{1.1}) and (\ref{1.1++}) with $\kappa>0$. Unfortunately, we  note that at this moment, neither the  changing of   variables (\ref{0.01}) nor (\ref{0.02}) are  suitable for dealing with this kind of problem because $1-\kappa t^2$ may be negative. As far as we know,  in the mathematical literature, few results are known on (\ref{1.1})  and  (\ref{1.1++}) with $\kappa>0$.
Hereafter, we assume that potential $V:\mathbb{R}^N\rightarrow \mathbb{R}$  is continuous and satisfies: 
\begin{itemize}
\item [$(V_0)$] $V(x)\geq V_0>0, \ \text{ for all}\ x\in\mathbb{R}^N.$
\item [$(V_1)$] $\lim\limits_{|x|\rightarrow \infty}V(x)=V_\infty
$
and $V(x)\leq V_\infty,$  $\text{ for all}\ x\in\mathbb{R}^N.$
\end{itemize}

We have the following result:

\begin{theorem}
\label{th1.1}
Assume that  $2<q<2^*$, $(V_0)$ and $(V_1)$.  Then,  there  exists  some $\kappa_0>0$ such that  for all $\kappa\in [0,\kappa_0)$,  (\ref{1.1}) has a solution. Moreover,   $\max\limits_{x\in \mathbb{R}^N}|u(x)|\leq \sqrt{\dfrac{1}{\kappa}}$.
\end{theorem}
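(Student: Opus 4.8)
The plan is to remove the sign obstruction by first solving a \emph{truncated} problem whose leading coefficient stays uniformly positive, and then showing that for small $\kappa$ its solution automatically respects the bound $\|u\|_\infty\le 1/\sqrt\kappa$, so that the truncation is inactive and the original equation is recovered. Expanding $\tfrac{\kappa}{2}(\Delta|u|^2)u=\kappa u^2\Delta u+\kappa u|\nabla u|^2$ shows that the formal energy associated with (\ref{1.1}) is $\tfrac12\int(1-\kappa u^2)|\nabla u|^2+\tfrac12\int V u^2-\tfrac1q\int|u|^q$, whose coefficient $1-\kappa u^2$ changes sign exactly at $|u|=1/\sqrt\kappa$. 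I would fix $c\in(0,1)$ and replace $1-\kappa s^2$ by a smooth even function $a_\kappa(s)$ with $a_\kappa(s)=1-\kappa s^2$ for $|s|\le c/\sqrt\kappa$ and $a_\kappa(s)\equiv 1-c^2$ for $|s|$ large, so that $1-c^2\le a_\kappa\le 1$ uniformly in $\kappa$.

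Following Colin--Jeanjean, I would restore a $C^1$ variational structure (the quasilinear term is not $C^1$ directly on $H^1$) by the change of variable $u=f_\kappa(v)$, where $f_\kappa$ is odd and solves $f_\kappa'=1/\sqrt{a_\kappa(f_\kappa)}$, $f_\kappa(0)=0$. Since $1-c^2\le a_\kappa\le1$, the map $f_\kappa$ is a bi-Lipschitz diffeomorphism with constants independent of $\kappa$, so $v\mapsto u=f_\kappa(v)$ is a homeomorphism of $H^1(\mathbb{R}^N)$ onto itself. The identity $a_\kappa(f_\kappa(v))f_\kappa'(v)^2=1$ turns the truncated energy into the semilinear functional $I_\kappa(v)=\tfrac12\int|\nabla v|^2+\tfrac12\int V(x)f_\kappa(v)^2-\tfrac1q\int|f_\kappa(v)|^q$, which is well defined and $C^1$ on $H^1(\mathbb{R}^N)$.

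For each fixed small $\kappa$ I would produce a nontrivial critical point of $I_\kappa$ by the mountain pass theorem: $(V_0)$ together with $2<q<2^*$ gives the mountain pass geometry and subcritical growth, while $(V_1)$ enters in the standard way, by comparing the mountain pass level $c_\kappa$ with the level $c_\kappa^\infty$ of the problem at infinity (with $V$ replaced by $V_\infty$), to prove $c_\kappa<c_\kappa^\infty$ and thereby recover compactness of the Palais--Smale sequence and a nonzero weak limit $v_\kappa$. Then $u_\kappa=f_\kappa(v_\kappa)$ solves the truncated quasilinear equation, and elliptic regularity upgrades it to a classical solution.

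The perturbative heart of the argument, and the step I expect to be the main obstacle, is the uniform a priori bound. As $\kappa\downarrow0$ the functionals $I_\kappa$ converge to the semilinear functional of $-\Delta v+Vv=|v|^{q-2}v$; from this I would derive $c_\kappa\to c_0$ and a bound $\|v_\kappa\|_{H^1}\le C$ independent of $\kappa$. Because $f_\kappa$ is uniformly bi-Lipschitz, the equation for $v_\kappa$ has a uniformly subcritical right-hand side, so a Moser iteration (or $L^p$-bootstrap) yields $\|u_\kappa\|_\infty=\|f_\kappa(v_\kappa)\|_\infty\le M$ with $M$ independent of $\kappa$. Choosing $\kappa_0$ so small that $M<c/\sqrt\kappa$ for all $\kappa\in(0,\kappa_0)$ forces $|u_\kappa|\le c/\sqrt\kappa$, a range on which $a_\kappa(u_\kappa)=1-\kappa u_\kappa^2$; hence the truncation is inactive, $u_\kappa$ solves the original equation (\ref{1.1}), and $\|u_\kappa\|_\infty\le 1/\sqrt\kappa$, which is exactly the asserted bound. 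Making every constant genuinely uniform in $\kappa$ — both in the compactness/mountain-pass step and in the $L^\infty$ estimate — is the delicate point on which the whole scheme rests.
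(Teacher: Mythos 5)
Your proposal follows essentially the same route as the paper: truncate the coefficient $1-\kappa u^2$ so that it stays uniformly positive, pass to the dual semilinear problem by the change of variable $u=G^{-1}(v)$, produce a mountain-pass critical point whose nontriviality is obtained by comparison with the problem at infinity under $(V_1)$, and then combine a $\kappa$-independent $H^1$ bound with a Moser iteration to show the truncation is inactive once $\kappa$ is small. The only cosmetic difference is that the paper settles for the weaker estimate $\|v_\kappa\|_\infty\le C_0\kappa^{-1/4}$, which is still $o(\kappa^{-1/2})$ and hence sufficient, whereas your uniform bound $M$ is also attainable because $1/g\le\sqrt{6}$ independently of $\kappa$.
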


For equation (\ref{1.1++}), we may state:
\begin{theorem}
\label{th1.2}
Assume that   $(V_0)$ with  $V_0\geq1$ and $(V_1)$.  Then,  there  exists  some $\kappa_1\in (0,\frac{1}{3})$ such that  for all $\kappa\in (0,\kappa_1)$,  (\ref{1.1++}) has a solution. Moreover,   $\max\limits_{x\in \mathbb{R}^N}|u(x)|\leq 1$.
\end{theorem}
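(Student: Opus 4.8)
The plan is to prove Theorem~\ref{th1.2} by the same truncation--perturbation scheme that handles (\ref{1.1}), adapting the estimates to the bounded, asymptotically linear nonlinearity $l(u)=\big[1-(1+u^2)^{-3}\big]u$. Writing the formal energy of (\ref{1.1++}) as
\begin{equation*}
J_\kappa(u)=\frac12\int_{\mathbb{R}^N}(1-\kappa u^2)|\nabla u|^2+\frac12\int_{\mathbb{R}^N}V(x)u^2-\int_{\mathbb{R}^N}L(u),\qquad L(u)=\int_0^u l(s)\,ds,
\end{equation*}
one faces two difficulties: the gradient coefficient $1-\kappa u^2$ loses sign where $|u|>\kappa^{-1/2}$, and the quadratic gradient term renders $J_\kappa$ neither well defined nor $C^1$ on $H^1(\mathbb{R}^N)$. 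First I would freeze the coefficient, replacing $1-\kappa t^2$ by a $C^1$ truncation $\beta_\kappa(t)$ equal to $1-\kappa t^2$ on $|t|\le 1$ and constant $(=1-\kappa)$ for $|t|\ge 1$; since $\kappa<\tfrac13$ this forces $\beta_\kappa\ge 1-\kappa>\tfrac23>0$ everywhere, so the truncated functional $\widetilde J_\kappa$ is uniformly elliptic. Because on $\{|u|\le 1\}$ the Euler--Lagrange equation of $\widetilde J_\kappa$ coincides with (\ref{1.1++}), it then suffices to produce a critical point of $\widetilde J_\kappa$ obeying $|u|\le 1$.

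To restore differentiability I would add a regularizing term and work on $X:=W^{1,p}(\mathbb{R}^N)\cap H^1(\mathbb{R}^N)$ with $p>N$, considering
\begin{equation*}
\widetilde J_{\kappa,\mu}(u)=\mu\int_{\mathbb{R}^N}\big(|\nabla u|^p+|u|^p\big)+\widetilde J_\kappa(u),\qquad \mu>0 .
\end{equation*}
Since $W^{1,p}\hookrightarrow L^\infty$ for $p>N$, the troublesome term $\tfrac12\int\beta_\kappa'(u)|\nabla u|^2\varphi$ is now controlled and $\widetilde J_{\kappa,\mu}\in C^1(X,\mathbb{R})$. The mountain pass geometry holds uniformly in $\mu$: $L(u)=O(u^4)$ near the origin together with $V\ge V_0>0$ makes $0$ a strict local minimum, while along a fixed direction the destabilizing term $-\tfrac\kappa2\int u^2|\nabla u|^2$ (present precisely because $\kappa>0$) drives $\widetilde J_{\kappa,\mu}(tu_0)\to-\infty$, producing a mountain pass value $c_\mu$ and a critical point $u_\mu\in X$.

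Next I would derive bounds on $u_\mu$ uniform in $\mu$, controlling $\mu\|u_\mu\|_{W^{1,p}}^p$ and $\|u_\mu\|_{H^1}$ from $c_\mu$ and the Palais--Smale identity. On $\mathbb{R}^N$ compactness is delicate: I would compare $c_\mu$ with the level $c_\mu^\infty$ of the problem with $V$ replaced by $V_\infty$, and use $(V_1)$, i.e.\ $V\le V_\infty$, to obtain $c_\mu\le c_\mu^\infty$ (strict when $V\not\equiv V_\infty$), which rules out vanishing and escape of mass to infinity and yields a nontrivial weak limit. Letting $\mu\to0^+$ then removes the perturbation; the main analytic work is showing $u_\mu\rightharpoonup u$ strongly enough that the quasilinear term survives in the limit, so that $u$ is a nontrivial weak solution of the truncated equation.

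Finally I would prove $|u|\le 1$, which deactivates the truncation and makes $u$ a solution of (\ref{1.1++}). This is where $V_0\ge 1$ enters: testing against $(u-1)^+$ (equivalently, inspecting a positive maximum of the regularized solution) and using $l(u)\le u\le V_0 u\le Vu$ together with the uniform ellipticity $\beta_\kappa\ge\tfrac23$, one bounds $u\le 1$, and symmetrically $u\ge-1$; a De Giorgi--Nash--Moser/bootstrap argument then upgrades $u$ to a classical solution. The hard part, as in Theorem~\ref{th1.1}, is the combined closing step: simultaneously recovering the quasilinear term $\tfrac\kappa2[\Delta|u|^2]u$ in the $\mu\to0$ limit on the unbounded domain \emph{and} establishing the a priori bound $|u|\le 1$ that cancels the truncation. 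In contrast with the power nonlinearity of (\ref{1.1}), the bounded nonlinearity $l$ here makes this $L^\infty$ estimate the most sensitive point, which is exactly why the extra hypothesis $V_0\ge1$ is imposed.
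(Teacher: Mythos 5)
There is a genuine gap, and it is fatal to the scheme as you set it up: you truncate only the gradient coefficient and never modify the nonlinearity $l$. The paper's proof of Theorem \ref{th1.2} performs \emph{two} truncations: it replaces $\sqrt{1-\kappa t^2}$ for $|t|\ge 1$ by a positive $C^1$ extension (essentially as you do), \emph{and} it replaces $l(t)=[1-(1+t^2)^{-3}]t$ for $t\ge 1$ by the superlinear power $f(t)=\frac{7}{8}t^{q-1}$ with $2<q<\min\{\frac{14}{5},2^*\}$. The second truncation is not cosmetic: it is what creates the mountain pass geometry. Your claimed unbounded-below direction --- that $-\frac{\kappa}{2}\int u^2|\nabla u|^2$ drives $\widetilde J_{\kappa,\mu}(t u_0)\to-\infty$ --- is incompatible with your own truncation: once $\beta_\kappa\ge 1-\kappa>0$, the gradient term contributes $\frac{1}{2}\int\beta_\kappa(tu_0)\,t^2|\nabla u_0|^2\ge \frac{1-\kappa}{2}t^2\int|\nabla u_0|^2\ge 0$, and since $L(s)=\frac{s^2}{2}+\frac{1}{4(1+s^2)^2}-\frac14\le\frac{s^2}{2}$ while $V\ge V_0\ge 1$, one gets $\widetilde J_{\kappa,\mu}(tu_0)\ge \frac{1-\kappa}{2}t^2\int|\nabla u_0|^2+O(1)\to+\infty$. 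Worse, your truncated equation admits \emph{no} nontrivial solution at all: testing $-\mathrm{div}(\beta_\kappa(u)\nabla u)+\frac12\beta_\kappa'(u)|\nabla u|^2+V(x)u=l(u)$ against $u$ gives
$$\int_{\mathbb{R}^N}\Big[\beta_\kappa(u)+\tfrac12\beta_\kappa'(u)u\Big]|\nabla u|^2\,dx=\int_{\mathbb{R}^N}\big[l(u)u-V(x)u^2\big]\,dx\le\int_{\mathbb{R}^N}\big(1-V(x)\big)u^2\,dx\le 0,$$
whereas $\beta_\kappa(t)+\frac12\beta_\kappa'(t)t\ge 1-2\kappa>\frac13$ for $\kappa<\frac13$; hence $\nabla u\equiv 0$ and $u\equiv 0$ (the $\mu$-term only reinforces this conclusion). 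So the combination ``truncate the coefficient, keep $l$, assume $V\ge 1$'' can only produce the trivial solution. The essential missing idea is the superlinear modification of $l$ beyond $t=1$, which is later undone by an $L^\infty$ bound that shrinks as $\kappa\to 0^+$.

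The remaining differences are legitimate alternatives but secondary. You propose the $W^{1,p}\cap H^1$ perturbation method (letting $\mu\to 0^+$) to restore differentiability, whereas the paper uses the change of variables $v=G(u)$ with $G(u)=\int_0^u g(s)\,ds$, which turns the (doubly) truncated problem into a genuinely semilinear equation in $H^1(\mathbb{R}^N)$ with a $C^1$ functional, so no vanishing perturbation parameter is needed and the delicate $\mu\to 0$ limit you flag simply does not arise. Likewise, your maximum-principle argument with $(u-1)^+$ is a reasonable alternative to the paper's Moser iteration, but the paper needs the quantitative bound $\|v_\kappa\|_\infty\le C_1\kappa^{\frac{1}{2(2^*-q)}}$, small with $\kappa$, precisely in order to deactivate the truncation of the \emph{nonlinearity}, which your scheme lacks; and the hypothesis $V_0\ge 1$ is used in the paper to obtain boundedness of the Cerami sequences, not in a maximum principle.
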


\begin{remark}
When $V(x)\equiv V_\infty$, using the classical
results given by Berestycki and  Lions \cite{Ber-Lions}, Theorems  \ref{th1.1} and \ref{th1.2} are still true. Furthermore, $u$ also has the following properties:
\begin{itemize}
\item[$(1)$] $u>0$ on $\mathbb{R}^N;$
\item[$(2)$] $u$ is spherically symmetric  and $u$ decreases
with respect to $|x|$;
\item[$(3)$] $u\in C^2(\mathbb{R}^N);$
\item[$(4)$]  $u$ together with its derivatives up to order 2 have exponential decay at infinity:$$|D^\alpha u|\leq Ce^{-\delta|x|},\quad x\in \mathbb{R}^N,$$
for some $C$, $\delta>0$ and $|\alpha|\leq2$.
\end{itemize}
Therefore, in this paper, we assume that $V(x)\leq V_\infty$ for all $x\in \mathbb{R}^N$ but $V(x)\not\equiv V_\infty$.

\end{remark}

\begin{remark}
When $\kappa=0$, (\ref{00.1}) has already been studied by many authors, see e.g. \cite{CO-SE-SO-11, Ber-Lions, Jean-Tan}. So, in  Theorem \ref{th1.1}, we only consider the case $\kappa>0$.
\end{remark}

\begin{remark}
When $\kappa=0$,  equation (\ref{1.1++}) turns into the following asymptotically semilinear problem  \begin{equation*}
-\Delta u+ V(x)u=\Big[1-\frac{1}{(1+|u|^2)^3}\Big]u,\ x\in\mathbb{R}^N.
\end{equation*}
That is, the nonlinearity $\rho(t)=\big[1-\frac{1}{(1+|t|^2)^3}\big]t$ satisfy $\lim\limits_{t\rightarrow \infty}\frac{\rho(t)}{t}=1.$ Although the existence of nontrivial solutions  for this type of equation may already be known, we have been unable to find a proper reference.
\end{remark}

\begin{remark}
We remark that in Theorem \ref{th1.1}, $\kappa_0$  is dependent on the value $2<q<2^*$.
\end{remark}

\begin{remark}
In \cite{Br-La}, L. Br\"{u}ll, H. Lange and K\"{o}ln  studied the one-dimensional quasilinear Schr\"{o}dinger equations
\begin{equation}\label{1.2-}
iz_t=-\partial_x^2 z-|z|^{2p}z+ \kappa\partial_x^2
(|z|^2)z,\ \ x\in \mathbb{R}
\end{equation}
and
\begin{equation}\label{1.3-}
iz_t=-\partial_x^2 z-\Big[\mu+\dfrac{A}{(a+|z|^2)^3}\Big]z+ \kappa\partial_x^2
(|z|^2)z,\ \ x\in \mathbb{R},
\end{equation}
where $z=z(x,t)$ is the unknown wave function,  $\kappa$ is a real constant, $p>0$, $\mu>0$ and $A<0$. Under some conditions on $p$, $\mu$ and $A$, they proved that if $0<\kappa<\kappa_2$ (or $0<\kappa<\kappa_3$)  with some $\kappa_2, \kappa_3>0$,  then (\ref{1.2-}) (or (\ref{1.3-})) has a standing wave solution $v(x)$ with $v(x)>0$, $v(-x)=v(x)$, $v'(x)<0$ for $x>0$ and $\lim\limits_{|x|\rightarrow \infty}v(x)=0.$ Moreover, this solution is unique up to translation. Here, we generalize their results  to higher dimension.
\end{remark}
\begin{remark}
For $\kappa>0$, H. Lange,  M. Poppenberg and  H. Teisniann  \cite{La-Po} studied the whole space Cauchy problem for quasilinear Schr\"{o}dinger equation (\ref{1.2}) with $W=0$ and $\rho=0$. When $N=1$ and $z(0,x)=\phi(x)$, they obtained $L^2$$-$solutions for (\ref{1.2}) with  $\kappa|\phi(x)|\leq\delta<1$. Moreover, for $2\kappa\|\phi\|_{W^{1,\infty}}<1$, they also proved the existence of $H^2$$-$solutions for arbitrary space
dimension. We refer to  \cite{La-Po} for more details.
\end{remark}


Note that (\ref{1.1}) is the Euler-Lagrange equation associated to the natural energy
functional
\begin{equation}\label{2.1}I(u)=\frac{1}{2}\int_{\mathbb{R}^N}(1-\kappa u^2)|\nabla u|^2dx+\frac{1}{2}\int_{\mathbb{R}^N}V(x)u^2dx-\frac{1}{q}\int_{\mathbb{R}^N}|u|^qdx
\end{equation}
From the variational point of view, the first difficulty that we have to deal with
is to find some proper Sobolev space  since (\ref{2.1}) is not well defined in $H^1(\mathbb{R}^N)$ for $N\geq3$ and $\kappa\not=0.$ However, even if this difficulty is set up, there is another one: to guarantee the positiveness of the  principal part, i.e. $1-\kappa u^2>0$.

In  order to prove our main results, we first establish a nontrivial solution  for  a modified quasilinear Schr\"{o}dinger equation. Precisely, we consider the existence of  nontrivial solutions for the following  quasilinear
Schr\"{o}dinger equation
\begin{equation}\label{1.0}
-div(g^2(u)\nabla u)+g(u)g'(u)|\nabla u|^2+V(x)u=l(u),\ \ x\in
\mathbb{R}^N
\end{equation}
with $g(t)=\sqrt{1-\kappa t^2}$ for $|t|<\sqrt{\dfrac{1}{3\kappa}}$ for $\kappa>0$ ,  where  $V:\mathbb{R}^N\rightarrow \mathbb{R}$ is a continuous function, $2<q<2^*$, $N\geq3.$  Clearly, when $g(t)=\sqrt{1-\kappa t^2}$ and $l(u)=|u|^{q-2}u$, (\ref{1.0}) turns into (\ref{1.1}).
Then, by using Morse $L^\infty$ estimate, we prove that there exists $\kappa_0>0$ such that for all $\kappa\in [0,\kappa_0)$ the solutions that we have found  verify the estimate $\max \limits_{\mathbb{R}^N}|u|<\sqrt{\dfrac{1}{3\kappa}}$. Thus, they are solutions of the original problem (\ref{1.1}). To prove  Theorem \ref{th1.2}, we need further to  modify the  nonlinearity.

We mention that  similar method have been adopted by Alves,  Soares and Souto  to study a supercritical Schr\"{o}dinger-Poisson equation \cite{CO-SE-SO-11}. In
\cite{CO-SE-SO-11}, they mainly modified the nonlinearity and provide an estimate involving the $L^\infty-$norm of a solution related to a subcritical problem. However, unlike \cite{CO-SE-SO-11}, here we need to modify the  principal part first.

The organization of this paper is as follows: In Section 2, we  reformulate  the
problem and study the existence of nontrivial solutions of a modified quasilinear Schr\"{o}dinger equation (\ref{1.0}). In Section 3, we provide an estimate involving the $L^\infty$-norm of a solution related to (\ref{1.0}) and we  prove Theorems 1.1.  Section 4  is devoted to prove Theorem \ref{th1.2}.

 In this paper,  $C$, $C_i$, $i=1,2,\cdots$ denote positive (possibly
different) constant. Moreover, $\|\cdot\|_p$ denotes the norm of $L^p(\mathbb{R}^N)$.

\section{The modified problem}

Hereafter, we shall work on  the space $H^{1}(\mathbb{R}^N)$ endowed with the norm
$$\|u\|=\Bigg[\int_{\mathbb{R}^N}(|\nabla u|^2+V(x)u^2)dx\Bigg]^{\frac{1}{2}}.$$
By $(V_0)$ and $(V_1)$, the above norm is equivalent the usual one on $H^{1}(\mathbb{R}^{N})$ .

For equation (\ref{1.0}), we let $l(t)=|t|^{q-2}t$ for $2<q<2^*$ and  we will consider $g:[0, +\infty) \to \mathbb{R}$ given by
\begin{equation*}g(t)=\begin{cases}
\sqrt{1-\kappa t^2},&\quad \text{ if } \quad0\leq t<\sqrt{\dfrac{1}{3\kappa}},\\
\dfrac{1}{3\sqrt{2 \kappa}t}+\sqrt{\dfrac{1}{6}},&\quad \text{ if }\quad \sqrt{\dfrac{1}{3\kappa}}\leq t
.\end{cases}\end{equation*}
Setting $g(t)=g(-t)$ for all $t\leq 0,$ it follows that $g \in C^1(\mathbb{R},(\sqrt{\frac{1}{6}},1])$, $g$ is a even function, increases in $(-\infty,0)$ and  decreases in $[0,+\infty)$.

Note that (\ref{1.0}) is the Euler-Lagrange equation associated to the natural energy
functional
 \begin{equation}\label{2.2}I_\kappa(u)=\frac{1}{2}\int_{\mathbb{R}^N}g^2(u)|\nabla u|^2dx+\frac{1}{2}\int_{\mathbb{R}^N}V(x)|u|^2dx-\frac{1}{q}\int_{\mathbb{R}^N}|u|^qdx,
\end{equation}

Our goal is proving the existence of a nontrivial critical point $u$ of (\ref{2.2}) satisfying $\sup\limits_{x\in \mathbb{R}^N}|u(x)|\leq \sqrt{\dfrac{1}{3\kappa}}$, which will be a  nontrivial solution of (\ref{1.0}) with $g(u)=\sqrt{1-\kappa u^2}$, and so, a nontrivial solution of (\ref{1.1}).

In what follows, we set
$$
G(t)=\int_{0}^{t}g(s)ds
$$
and we observe that inverse function $G^{-1}(t)$  exists and it is an odd function. Moreover, it is very important to observe that $G, G^{-1} \in C^{2}(\mathbb{R})$.

Next lemma shows important properties involving functions $g$ and $G^{-1}$ which will be used later on.

\begin{lemma}  \label{l2.1}  \begin{itemize}
\item [$(1)$] $\lim\limits_{t\rightarrow 0}\dfrac{G^{-1}(t)}{t}=1$;
\item [$(2)$] $\lim\limits_{t\rightarrow \infty}\dfrac{G^{-1}(t)}{t}=\sqrt{6}$;
\item [$(3)$] $t\leq G^{-1}(t)\leq \sqrt{6}t,$ for all $t\geq 0;$
\item [$(4)$] $-\dfrac{1}{2}\leq\dfrac{t}{g(t)}g'(t)\leq 0,$ for all $t\geq 0.$
\end{itemize}\end{lemma}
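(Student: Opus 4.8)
The plan is to prove the four assertions by direct computation, handling the two branches of $g$ separately and exploiting that $g$ is continuous, even, and strictly decreasing on $[0,+\infty)$. I would start with assertion $(4)$, since the explicit form of $\frac{t}{g(t)}g'(t)$ is what drives the remaining parts. On the inner branch $0\le t<\sqrt{1/(3\kappa)}$ one has $g(t)=\sqrt{1-\kappa t^2}$, so $g'(t)=-\kappa t/\sqrt{1-\kappa t^2}=-\kappa t/g(t)$, whence
$$\frac{t}{g(t)}g'(t)=\frac{-\kappa t^2}{1-\kappa t^2}.$$
Setting $u=\kappa t^2\in[0,1/3)$, the right-hand side equals $-u/(1-u)$, which is $0$ at $u=0$ and decreases to $-\tfrac12$ as $u\uparrow\tfrac13$; hence the quantity lies in $(-\tfrac12,0]$ on this branch. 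On the outer branch $t\ge\sqrt{1/(3\kappa)}$, where $g(t)=\frac{1}{3\sqrt{2\kappa}\,t}+\frac{1}{\sqrt6}$, I would compute $g'(t)=-\frac{1}{3\sqrt{2\kappa}\,t^2}$ and simplify
$$\frac{t}{g(t)}g'(t)=\frac{-1}{3\sqrt{2\kappa}\,t\,g(t)}=\frac{-1}{1+\sqrt{3\kappa}\,t},$$
the essential algebraic step being $3\sqrt{2\kappa}\,t\,g(t)=1+\frac{3\sqrt{2\kappa}}{\sqrt6}t=1+\sqrt{3\kappa}\,t$. Since $t\ge\sqrt{1/(3\kappa)}$ forces $\sqrt{3\kappa}\,t\ge1$, this expression lies in $[-\tfrac12,0)$. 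Combining both branches (and noting each yields exactly $-\tfrac12$ at the junction, consistent with $g\in C^1$) gives $(4)$.

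Next I would establish the two-sided bound $(3)$. Since $g$ decreases on $[0,+\infty)$ from $g(0)=1$ down to $\lim_{t\to\infty}g(t)=1/\sqrt6$, we have $1/\sqrt6\le g(s)\le1$ for every $s\ge0$. Integrating over $[0,s]$ gives $s/\sqrt6\le G(s)\le s$. Substituting $s=G^{-1}(t)$, which is legitimate because $G$ is an increasing bijection of $[0,+\infty)$, and reading off the two inequalities separately yields $t\le G^{-1}(t)$ and $G^{-1}(t)\le\sqrt6\,t$, i.e. $(3)$.

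Finally, for $(1)$ and $(2)$ I would differentiate the inverse via $(G^{-1})'(t)=1/g\big(G^{-1}(t)\big)$. For $(1)$, since $G^{-1}(0)=0$ and $G^{-1}\in C^2$, the difference quotient gives $\lim_{t\to0}G^{-1}(t)/t=(G^{-1})'(0)=1/g(0)=1$. For $(2)$, by $(3)$ we have $G^{-1}(t)\to\infty$ as $t\to\infty$, so $g\big(G^{-1}(t)\big)\to1/\sqrt6$; applying L'Hôpital's rule then gives $\lim_{t\to\infty}G^{-1}(t)/t=\lim_{t\to\infty}(G^{-1})'(t)=1/\big(1/\sqrt6\big)=\sqrt6$. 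The only steps that demand genuine care are the algebraic collapse to $\frac{-1}{1+\sqrt{3\kappa}\,t}$ on the outer branch in $(4)$ and the correct endpoint values of $g$; beyond this bookkeeping across the two branches, no real obstacle arises.
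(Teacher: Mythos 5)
Your proof is correct and follows essentially the same route as the paper: parts (1)--(2) via $(G^{-1})'=1/g(G^{-1})$, part (3) by integrating the bounds $1/\sqrt6\le g\le 1$, and part (4) by the branchwise computation that the paper compresses into ``by a direct calculation.'' Your explicit verification of (4) on both branches (including the value $-\tfrac12$ at the junction) is exactly the omitted bookkeeping and checks out.
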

\begin{proof} By the definition of $g$, 
$$
\lim\limits_{t\rightarrow 0}\dfrac{G^{-1}(t)}{t}=\lim\limits_{t\rightarrow 0}\dfrac{1}{g(G^{-1}(t))}=1
$$
and
$$
\lim\limits_{t\rightarrow \infty}\dfrac{G^{-1}(t)}{t}=\lim\limits_{t\rightarrow \infty}\dfrac{1}{g(G^{-1}(t))}=\sqrt{6}.
$$ 
Thus, (1) and (2) are proved. Since $g(t)>0$ is decreasing in $[0,\infty)$, then $\sqrt{\dfrac{1}{6}}t\leq g(t)t\leq G(t)\leq t$ for all $t\geq0$, which implies  (3).  By a direct calculation,  we get (4).

\end{proof}

Now, fixing the change variable
\begin{equation} \label{Variable}
v=G(u)=\int_{0}^{u}g(s)ds,
\end{equation}
we observe that functional $I_\kappa$ can be written of the following way
\begin{equation}\label{1.7}
J_\kappa(v)=\frac{1}{2}\int_{\mathbb{R}^N} |\nabla
v|^2dx+\frac{1}{2}\int_{\mathbb{R}^N}
V(x)|G^{-1}(v)|^2dx-\frac{1}{q}\int_{\mathbb{R}^N}
|G^{-1}(v)|^qdx.\end{equation}
 From Lemma \ref{l2.1},  $J_\kappa$ is well defined in $H^1(\mathbb{R}^N)$, $J\in C^1(H^1(\mathbb{R}^N), \mathbb{R})$ and
\begin{equation}\label{1.9}
J_\kappa'(v)\psi=\int_{\mathbb{R}^N} \Big[\nabla v\nabla \psi + V(x)\frac{G^{-1}(v)}{g(G^{-1}(v))}\psi -\frac{|G^{-1}(v)|^{q-2}G^{-1}(v)}{g(G^{-1}(v))}\psi
\Big]dx,
\end{equation}
for all $v, \phi \in H^{1}(\mathbb{R}^{N})$.

\begin{lemma} \label{L0}
If $v \in C^{2}(\mathbb{R}^{N}) \cap H^{1}(\mathbb{R}^{N})$ is a critical point of $J_\kappa$, then \linebreak $u=G^{-1}(v) \in C^{2}(\mathbb{R}^{N}) \cap H^{1}(\mathbb{R}^{N})$ and it is a classical solution for (\ref{1.0}).
\end{lemma}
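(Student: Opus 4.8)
The plan is to exploit the change of variable $v=G(u)$ directly: since $v$ is assumed to be a $C^2\cap H^1$ critical point, I only need to verify that $u=G^{-1}(v)$ inherits the same regularity and that a chain-rule computation turns the Euler--Lagrange identity (\ref{1.9}) into (\ref{1.0}). No elliptic bootstrap is required, because the $C^2$ regularity of $v$ is granted in the hypothesis.

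First I would check that $u=G^{-1}(v)$ lies in $C^2(\mathbb{R}^N)\cap H^1(\mathbb{R}^N)$. Since $v\in C^2(\mathbb{R}^N)$ and $G^{-1}\in C^2(\mathbb{R})$, the composition is automatically $C^2$. For integrability I would invoke Lemma \ref{l2.1}: part $(3)$ together with the oddness of $G^{-1}$ gives $|u|=|G^{-1}(v)|\leq\sqrt{6}\,|v|$, so $u\in L^2(\mathbb{R}^N)$; differentiating the identity $v=G(u)$ yields $\nabla v=g(u)\nabla u$, and since $g$ takes values in $(\sqrt{1/6},1]$ we obtain $|\nabla u|=|\nabla v|/g(u)\leq\sqrt{6}\,|\nabla v|$, whence $\nabla u\in L^2(\mathbb{R}^N)$. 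Thus $u\in H^1(\mathbb{R}^N)$.

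Next I would pass from the weak critical-point identity to a pointwise equation. As $J_\kappa'(v)\psi=0$ for all $\psi\in H^1(\mathbb{R}^N)$, formula (\ref{1.9}) shows that $v$ is a distributional solution of
$$-\Delta v+V(x)\frac{G^{-1}(v)}{g(G^{-1}(v))}=\frac{|G^{-1}(v)|^{q-2}G^{-1}(v)}{g(G^{-1}(v))}.$$
Because $v\in C^2(\mathbb{R}^N)$ and $g$ is bounded away from $0$, both sides are continuous, so this identity in fact holds pointwise. Writing $u=G^{-1}(v)$ and $g(G^{-1}(v))=g(u)$, the relation becomes $-\Delta v+V(x)u/g(u)=|u|^{q-2}u/g(u)$.

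The heart of the argument is then the chain rule. From $\nabla v=g(u)\nabla u$ I would compute $\Delta v=\operatorname{div}(g(u)\nabla u)=g(u)\Delta u+g'(u)|\nabla u|^2$. Substituting this into the pointwise equation for $v$ and multiplying through by the positive quantity $g(u)$ produces
$$-g^2(u)\Delta u-g(u)g'(u)|\nabla u|^2+V(x)u=|u|^{q-2}u.$$
Recalling $\operatorname{div}(g^2(u)\nabla u)=g^2(u)\Delta u+2g(u)g'(u)|\nabla u|^2$, one sees that the left-hand side equals $-\operatorname{div}(g^2(u)\nabla u)+g(u)g'(u)|\nabla u|^2+V(x)u$, which is precisely (\ref{1.0}) with $l(u)=|u|^{q-2}u$. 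I do not expect a genuine obstacle: the only points needing care are confirming continuity of the nonlinear terms so that the weak-to-pointwise passage is legitimate, and keeping the two divergence expansions straight, since they differ exactly by the factor multiplying the $g(u)g'(u)|\nabla u|^2$ term that makes the identity close.
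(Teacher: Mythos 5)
Your argument is correct and is essentially the paper's own: both proofs rest on the relation $\nabla v=g(u)\nabla u$ together with the $C^{2}$ regularity of $G^{-1}$ and the bound $g\geq\sqrt{1/6}$, the only difference being that the paper stays in weak form (substituting $\psi=g(u)\varphi$ into $J_\kappa'(v)\psi=0$ and integrating by parts against $\varphi\in C_0^\infty(\mathbb{R}^N)$) while you first upgrade the equation for $v$ to a pointwise identity and then apply the chain rule. The two computations are equivalent, and your expansions $\Delta v=g(u)\Delta u+g'(u)|\nabla u|^{2}$ and $\operatorname{div}(g^{2}(u)\nabla u)=g^{2}(u)\Delta u+2g(u)g'(u)|\nabla u|^{2}$ are both correct, so the identity closes exactly as claimed.
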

\begin{proof}

By using the fact that $G^{-1} \in C^{2}(\mathbb{R})$ together with Lemma \ref{l2.1}, a direct computation gives $u=G^{-1}(v)$ belongs to  $C^{2}(\mathbb{R}^{N}) \cap H^{1}(\mathbb{R}^{N})$.

If $v$ is a critical point for $J_\kappa$, we have that
\[
\int_{\mathbb{R}^N} \Big[\nabla v\nabla \psi + V(x)\frac{G^{-1}(v)}{g(G^{-1}(v))}\psi -\frac{|G^{-1}(v)|^{q-2}G^{-1}(v)}{g(G^{-1}(v))}\psi
\Big]dx=0,\quad \forall\psi \in H^{1}(\mathbb{R}^N).
\]
For each $\varphi \in C^\infty_0(\mathbb{R}^N) $, we consider $\psi=g(u)\varphi \in C_0^{2}(\mathbb{R}^N) \subset H^{1}(\mathbb{R}^{N})$  in (\ref{1.9}), to get
\[
\int_{\mathbb{R}^N} [g^2(u)\nabla u\nabla \varphi + g(u)g'(u)|\nabla u|^2 \varphi  +
V(x) u\varphi - |u|^{q-2}u\varphi ]dx =0
\]
or equivalently,
\[
\int_{\mathbb{R}^N} [-div(g^2(u)\nabla u) + g(u)g'(u)|\nabla u|^2 +
V(x) u-|u|^{q-2}u]\varphi dx =0 \,\,\, \forall \varphi \in C^{\infty}_{0}(\mathbb{R}^{N}),
\]
showing that $u$ is a classical solution of
$$
-div(g^2(u)\nabla u) + g(u)g'(u)|\nabla u|^2 +
V(x) u =|u|^{q-2}u \,\,\,\, \mbox{in} \,\,\,\, \mathbb{R}^{N}.
$$
\end{proof}

Therefore, in order to find a nontrivial solutions of (\ref{1.0}), it suffices studying the existence of nontrivial solutions  of  the following equation
\begin{eqnarray} \label{1.00}
-\Delta v+V(x)\frac{G^{-1}(v)}{g(G^{-1}(v))}-\frac{|G^{-1}(v)|^{q-2}G^{-1}(v)}{g(G^{-1}(v))}=0 \,\,\,\, \mbox{in} \,\,\,\, \mathbb{R}^N.\end{eqnarray}

Next, we establish the geometric hypotheses of the Mountain Pass Theorem for $J_\kappa$.
\begin{lemma}  \label{L2.1}
 For $2<q< 2^*$, there exist $\rho_0,$ $ a_0>0$, such that
$J_\kappa(v)\geq a_0$ for $\|v\|=\rho_0.$ Moreover, there exists $e\in H^1(\mathbb{R}^N)$ such that $J_\kappa(e)<0$.
\end{lemma}
\begin{proof}
By Lemma \ref{l2.1}$-$(3) and Sobolev embedding,
 \begin{equation*}
\begin{split}
J_\kappa(v)=&\frac{1}{2}\int_{\mathbb{R}^N} |\nabla
v|^2dx+\frac{1}{2}\int_{\mathbb{R}^N}
 V(x)|G^{-1}(v)|^2dx-\frac{1}{q} \int_{\mathbb{R}^N}
|G^{-1}(v)|^{q}dx\\
\geq&  \frac{1}{2}\int_{\mathbb{R}^N} |\nabla
v|^2dx+\frac{1}{2}\int_{\mathbb{R}^N} V(x)|v|^2dx-\frac{6^{q/2}}{q} \int_{\mathbb{R}^N}
|v|^{q}dx\\
\geq& \frac{1}{2}\|v\|^2-C\|v\|^{q}.
\end{split}\end{equation*}
Thereby, by choosing $\rho_0$ small, we get
\[
a_0= \frac{1}{2}\rho_0^2-C\rho_0^{q}>0,
\]
and so,
\[
J_\kappa(v)\geq a_0 \,\,\, \mbox{for} \,\,\, \|v\|=\rho_0.
\]

In order to prove the existence of $e\in H^1(\mathbb{R}^N)$ such that $J_\kappa(e)<0$, we fix $\varphi \in C_0^\infty(\mathbb{R}^N,[0,1])$ with $supp \varphi=\bar{B}_1$ and show that $J_\kappa(t\varphi)\rightarrow -\infty$ as $t\rightarrow \infty$, because the result follows taking $e=t\varphi$ with $t$ large enough. By Lemma \ref{l2.1}$-$(3),
\begin{equation*}\label{eq:barwq}
\begin{split}
J_\kappa(t \varphi)=&\frac{1}{2}t^2\int_{\mathbb{R}^N} |\nabla
\varphi|^2dx+\frac{1}{2}\int_{\mathbb{R}^N}
V(x)|G^{-1}(t\varphi)|^2dx-\frac{1}{q}\int_{\mathbb{R}^N}
|G^{-1}(t\varphi )|^{q}dx\\ \leq&3t^2\int_{\mathbb{R}^N}( |\nabla
\varphi|^2+V_\infty
\varphi^2)dx-\frac{1}{q}t^{q}\int_{\mathbb{R}^N} \varphi^{q} dx. \end{split}\end{equation*}
Since $q>2$, it follows that $J_\kappa(t\varphi)\rightarrow -\infty$ as $t\rightarrow \infty$. \end{proof}

\vspace{0.5 cm}

In consequence of Lemma \ref{L2.1} and Ambrosetti--Rabinowitz Mountain Pass Theorem \cite{Sch}, for the constant
\begin{eqnarray}\label{c}
c_\kappa=\inf\limits_{\gamma \in \Gamma_\kappa}\sup\limits_{t\in [0,1]}J_\kappa(\gamma(t))\geq a_0>0,
\end{eqnarray}
where
\[
\Gamma_\kappa=\{\gamma\in C([0,1],H^1(\mathbb{R}^N)): \gamma(0)=0, \gamma(1)\not=0, J_\kappa(\gamma(1))<0\},
\]
there exists a Palais-Smale sequence at level $c_\kappa$, that is,
\[
J_\kappa(v_n)\rightarrow c_\kappa \,\,\, \mbox{and} \,\,\, J_\kappa'(v_n)\rightarrow 0  \,\,\, \mbox{as} \,\,\, n\rightarrow \infty.
\]
\begin{lemma} \label{L3.3} For $2<q< 2^*$,
the Palais-Smale sequence $\{v_n\}$  is bounded.
\end{lemma}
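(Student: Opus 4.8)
The plan is to run the usual Ambrosetti--Rabinowitz boundedness argument, but against a test function adapted to the change of variables rather than against $v_n$ itself. Writing $u_n=G^{-1}(v_n)$, I would pair $J_\kappa'(v_n)$ with
\[
\psi_n=g\big(G^{-1}(v_n)\big)\,G^{-1}(v_n)=g(u_n)\,u_n.
\]
The point of this choice is that it exactly linearises the two zeroth-order terms in \eqref{1.9}: a direct computation gives $V(x)\frac{G^{-1}(v_n)}{g(G^{-1}(v_n))}\psi_n=V(x)u_n^2$ and $\frac{|G^{-1}(v_n)|^{q-2}G^{-1}(v_n)}{g(G^{-1}(v_n))}\psi_n=|u_n|^q$, so that the superlinear contribution reproduces exactly the term $\int_{\mathbb{R}^N}|u_n|^q$ appearing in $J_\kappa$, up to the factor $q$. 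Testing instead with $v_n$ does not produce this clean cancellation, and after the natural estimates it only yields boundedness for $q>2\sqrt6$; since we want the whole range $2<q<2^*$, the modified test function is essential.

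Before using $\psi_n$ I must check it is admissible, i.e. $\psi_n\in H^1(\mathbb{R}^N)$ with $\|\psi_n\|\le C\|v_n\|$. Set $\Phi(v)=g(G^{-1}(v))G^{-1}(v)$, so that $\psi_n=\Phi(v_n)$ and $\nabla\psi_n=\Phi'(v_n)\nabla v_n$. Using $(G^{-1})'(v)=1/g(G^{-1}(v))$ one finds
\[
\Phi'(v)=1+\frac{G^{-1}(v)}{g(G^{-1}(v))}\,g'\big(G^{-1}(v)\big),
\]
and Lemma \ref{l2.1}$-$(4) forces $\Phi'(v)\in[\tfrac12,1]$. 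In particular $|\nabla\psi_n|\le|\nabla v_n|$, while $|\psi_n|\le|u_n|=|G^{-1}(v_n)|\le\sqrt6\,|v_n|$ by Lemma \ref{l2.1}$-$(3); hence $\psi_n\in H^1(\mathbb{R}^N)$ and $\|\psi_n\|\le\sqrt6\,\|v_n\|$.

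With this in hand I would form the combination $J_\kappa(v_n)-\frac1q J_\kappa'(v_n)\psi_n$. The $\int_{\mathbb{R}^N}|u_n|^q$ contributions cancel, leaving
\[
J_\kappa(v_n)-\frac1q J_\kappa'(v_n)\psi_n=\int_{\mathbb{R}^N}\Big(\tfrac12-\tfrac1q\Phi'(v_n)\Big)|\nabla v_n|^2\,dx+\Big(\tfrac12-\tfrac1q\Big)\int_{\mathbb{R}^N}V(x)u_n^2\,dx.
\]
Since $\Phi'(v_n)\le1$ and $q>2$, the gradient coefficient is bounded below by $\tfrac12-\tfrac1q>0$; combining this with $u_n^2=|G^{-1}(v_n)|^2\ge v_n^2$ from Lemma \ref{l2.1}$-$(3), the right-hand side dominates $(\tfrac12-\tfrac1q)\|v_n\|^2$. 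On the left-hand side, $J_\kappa(v_n)\to c_\kappa$ is bounded and $|J_\kappa'(v_n)\psi_n|\le\|J_\kappa'(v_n)\|_{*}\,\|\psi_n\|\le\varepsilon_n\sqrt6\,\|v_n\|$ with $\varepsilon_n\to0$. This gives an inequality of the form $(\tfrac12-\tfrac1q)\|v_n\|^2\le C+\varepsilon_n C'\|v_n\|$, quadratic in $\|v_n\|$, from which boundedness is immediate.

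The only genuinely delicate step is recognising the test function $\psi_n=g(u_n)u_n$ and verifying, via Lemma \ref{l2.1}$-$(4), that the induced multiplier $\Phi'$ remains in $[\tfrac12,1]$; this simultaneously makes $\psi_n$ admissible, keeps the gradient coefficient positive, and forces the exact cancellation of the nonlinear term. Once this is set up, the remainder is routine Mountain Pass bookkeeping, and the superquadraticity needed for the argument is supplied automatically by $l(t)=|t|^{q-2}t$ with $q>2$.
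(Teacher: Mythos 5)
Your proof is correct and follows essentially the same route as the paper: the same test function $\psi_n=g(G^{-1}(v_n))G^{-1}(v_n)$, the same admissibility check via Lemma \ref{l2.1}$-$(3)(4) giving $|\nabla\psi_n|\le|\nabla v_n|$ and $|\psi_n|\le\sqrt6|v_n|$, and the same combination (the paper uses $qJ_\kappa(v_n)-J_\kappa'(v_n)\psi_n$, which is just $q$ times yours). The only differences are cosmetic — your bound $\|\psi_n\|\le\sqrt6\|v_n\|$ is slightly sharper than the paper's $6\|v_n\|$, and your motivational remarks about testing with $v_n$ are not in the paper.
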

\begin{proof} Since $\{v_n\}\subset H^1(\mathbb{R}^N)$ is a  Palais-Smale sequence, then
\begin{equation}\label{3.15}\begin{split}
J_\kappa(v_n)=&\frac{1}{2}\int_{\mathbb{R}^N} |\nabla
v_n|^2dx+\frac{1}{2}\int_{\mathbb{R}^N}
V(x)|G^{-1}(v_n)|^2dx-\dfrac{1}{q}\int_{\mathbb{R}^N}
|G^{-1}(v_n)|^{q}dx\\=&c_\kappa+o(1),\end{split}
\end{equation}
and for any $\psi \in H^{1}(\mathbb{R}^N)$,  $J_\kappa'(v_n)\psi=o(1)\|\psi\|$, that is,
\begin{equation}\label{3.2}
\int_{\mathbb{R}^N} \Bigg[\nabla v_n\nabla \psi + V(x)\frac{G^{-1}(v_n)}{g(G^{-1}(v_n))}\psi - \frac{|G^{-1}(v_n)|^{q-2}G^{-1}(v_n)}{g(G^{-1}(v_n))}\psi
\Bigg]dx =o(1)\|\psi\|.
\end{equation}
Fixing $\psi=G^{-1}(v_n)g(G^{-1}(v_n))$, it follows from Lemma \ref{l2.1}$-$(4),
\begin{eqnarray}\label{3.2h}|\nabla (G^{-1}(v_n)g(G^{-1}(v_n)))|\leq \Big[1+\dfrac{G^{-1}(v_n)}{g(G^{-1}(v_n))}g'(G^{-1}(v_n))\Big]|\nabla v_n|\leq |\nabla v_n|.\end{eqnarray}On the other hand, by   Lemma \ref{l2.1}$-$(3),
\begin{eqnarray}\label{3.2hhh}|G^{-1}(v_n)g(G^{-1}(v_n))|\leq  \sqrt{6}|v_n|.  \end{eqnarray}
Combining (\ref{3.2h}) and (\ref{3.2hhh}),  we have  $\psi \in H^1(\mathbb{R}^N)$ with $ \| \psi \|\leq 6\|v_n\|.$  Thus,  by  using $\psi=G^{-1}(v_n)g(G^{-1}(v_n))$ as a test function in (\ref{3.2}), we  derive that
\begin{eqnarray}\label{3.16}\begin{split}
o(1)\|v_n\| =&J_\kappa'(v_n)G^{-1}(v_n)g(G^{-1}(v_n))\\=&\int_{\mathbb{R}^N} \Big[\Big(1+\dfrac{G^{-1}(v_n)}{g(G^{-1}(v_n))}g'(G^{-1}(v_n))\Big)|\nabla v_n|^2 + V(x)|G^{-1}(v_n)|^2\\&-|G^{-1}(v_n)|^{q}\Big]dx\\ \leq &\int_{\mathbb{R}^N} \Big[ |\nabla v_n|^2 + V(x)|G^{-1}(v_n)|^2-|G^{-1}(v_n)|^{q}
\Big]dx.\end{split}
\end{eqnarray}
 Therefore, by (\ref{3.15}), (\ref{3.2}) and (\ref{3.16}),
\begin{equation*}\begin{split}
qc_\kappa+o(1)+o(1)\|v_n\|=&q J_\kappa(v_n)-J_\kappa'(v_n)G^{-1}(v_n)g(G^{-1}(v_n))\\ \geq &\frac{(q-2)}{2}\int_{\mathbb{R}^N}\Big[ |\nabla
v_n|^2+ V(x)|G^{-1}(v_n)|^2\Big]dx\\
\geq &\frac{(q-2)}{2}\|v_n\|^2,
\end{split}\end{equation*}
showing the boundedness of $\{v_n\}$.
\end{proof}

Since $\{v_n\}$ is a bounded sequence and $H^1(\mathbb{R}^N)$ is a separable Hilbert space, there exists $v_\kappa\in H^1(\mathbb{R}^N)$ and a subsequence of $\{v_n\}$, still denoted by itself, such  that
\[
v_n\rightharpoonup v_\kappa \,\,\, \mbox{in} \,\,\, H^1(\mathbb{R}^N), v_n \rightarrow v_\kappa \,\,\ \mbox{in} \,\,\,  L^q_{loc}(\mathbb{R}^N) \,\,\, \mbox{for} \,\,\, q\in [2,2^*) \,\,\ \mbox{and} \,\,\, v_n\rightarrow v_\kappa \,\,\, \mbox{ a.e. on} \,\,\, \mathbb{R}^N.
\]

\begin{theorem} \label{criticialpoint} The weak limit $v_\kappa$ of $\{v_n\}$ is a nontrivial critical point of $J_\kappa$ and $J_\kappa(v_\kappa) \leq c_\kappa$.
\end{theorem}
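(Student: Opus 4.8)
The plan is to prove three things in turn: that $v_\kappa$ is a critical point of $J_\kappa$, that $J_\kappa(v_\kappa)\le c_\kappa$, and finally that $v_\kappa\not\equiv 0$, the last being where the real difficulty lies.

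\textbf{Step 1 (critical point).} I would fix $\psi\in C_0^\infty(\mathbb{R}^N)$ and pass to the limit in $J_\kappa'(v_n)\psi=o(1)$. The term $\int\nabla v_n\nabla\psi$ converges to $\int\nabla v_\kappa\nabla\psi$ by weak convergence. For the two lower-order terms, Lemma \ref{l2.1} shows that $t\mapsto G^{-1}(t)/g(G^{-1}(t))$ and $t\mapsto |G^{-1}(t)|^{q-2}G^{-1}(t)/g(G^{-1}(t))$ are continuous with at most linear, respectively $(q-1)$-power, growth; since $\psi$ has compact support and $v_n\to v_\kappa$ in $L^q_{\mathrm{loc}}(\mathbb{R}^N)$ and a.e., the associated Nemytskii operators converge and the integrals pass to the limit by dominated convergence. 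Hence $J_\kappa'(v_\kappa)\psi=0$ for all $\psi\in C_0^\infty(\mathbb{R}^N)$, and by density for all $\psi\in H^1(\mathbb{R}^N)$.

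\textbf{Step 2 (energy bound).} Exactly as in the proof of Lemma \ref{L3.3}, with $\psi_n=G^{-1}(v_n)g(G^{-1}(v_n))$, the quantity $J_\kappa(v_n)-\tfrac1q J_\kappa'(v_n)\psi_n$ equals the integral of the nonnegative density
\[
\Big(\tfrac12-\tfrac1q-\tfrac1q\tfrac{G^{-1}(v_n)}{g(G^{-1}(v_n))}g'(G^{-1}(v_n))\Big)|\nabla v_n|^2+\tfrac{q-2}{2q}V(x)|G^{-1}(v_n)|^2,
\]
which is nonnegative by Lemma \ref{l2.1}-(4) and $q>2$; moreover its gradient coefficient is bounded below by $\tfrac12-\tfrac1q>0$. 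Since $J_\kappa'(v_n)\psi_n=o(1)$, the left-hand side tends to $c_\kappa$. Applying Fatou's lemma to the potential term (using a.e. convergence) and a standard weak lower semicontinuity argument to the gradient term (writing $\int a(v_n)|\nabla v_n|^2$ with $a(v_n)\to a(v_\kappa)$ a.e. and splitting off $\nabla v_n-\nabla v_\kappa$), the same density evaluated at $v_\kappa$ has integral $\le c_\kappa$. Because $J_\kappa'(v_\kappa)=0$ by Step 1, this integral is precisely $J_\kappa(v_\kappa)$, giving $J_\kappa(v_\kappa)\le c_\kappa$.

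\textbf{Step 3 (nontriviality).} Suppose for contradiction $v_\kappa\equiv 0$. First I would rule out vanishing: if $\sup_{y}\int_{B_1(y)}|v_n|^2\to 0$, then by Lions' lemma $v_n\to 0$ in $L^q$ for $2<q<2^*$, and combining $J_\kappa(v_n)\to c_\kappa$ with $J_\kappa'(v_n)v_n\to 0$ (via the growth bounds of Lemma \ref{l2.1}) forces $c_\kappa=0$, contradicting $c_\kappa\ge a_0>0$. Hence there are $\delta,R>0$ and $y_n$ with $\int_{B_R(y_n)}|v_n|^2\ge\delta$; as $v_\kappa=0$, the sequence $y_n$ cannot stay bounded, so $|y_n|\to\infty$. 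Translating $\tilde v_n:=v_n(\cdot+y_n)$ and using $(V_1)$, so that $V(\cdot+y_n)\to V_\infty$ locally uniformly, the weak limit $\tilde v\not\equiv 0$ is a nontrivial critical point of the autonomous functional $J_\kappa^\infty$ obtained by replacing $V$ with $V_\infty$. From the nonnegative density representation, together with $V\le V_\infty$ and Fatou's lemma on balls, one gets $c_\kappa\ge J_\kappa^\infty(\tilde v)$, and since any nontrivial critical point of $J_\kappa^\infty$ has energy at least its mountain-pass level $c_\kappa^\infty$, this yields $c_\kappa\ge c_\kappa^\infty$. On the other hand, taking a ground state $w$ of the limiting problem (Berestycki--Lions, \cite{Ber-Lions}), the identity $J_\kappa(tw)=J_\kappa^\infty(tw)-\tfrac12\int(V_\infty-V)|G^{-1}(tw)|^2\,dx$ with $V\le V_\infty$ and $V\not\equiv V_\infty$ gives $\max_{t\ge0}J_\kappa(tw)<\max_{t\ge0}J_\kappa^\infty(tw)=c_\kappa^\infty$, hence $c_\kappa<c_\kappa^\infty$. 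This contradiction forces $v_\kappa\not\equiv 0$.

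I expect the genuine obstacle to be Step 3, and within it the strict energy comparison $c_\kappa<c_\kappa^\infty$ and the careful translation/splitting argument identifying $\tilde v$ as a solution of the problem at infinity; Steps 1 and 2 are routine once Lemma \ref{l2.1} provides the uniform control on $g$, $G^{-1}$ and the sign of $tg'/g$.
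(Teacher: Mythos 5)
Your Steps 1 and 2 are essentially sound and close to what the paper does: the paper additionally establishes strong $H^1_{loc}$ convergence of $\{v_n\}$ (which it later uses to get a.e.\ convergence of gradients for a Fatou argument), but for criticality alone your dominated-convergence treatment of the lower-order terms suffices, since the gradient term is linear in $\nabla v_n$. The ruling-out of vanishing at the start of Step 3 also matches the paper.

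The genuine gap is in your strict comparison $c_\kappa < c_\kappa^{\infty}$. You take a ground state $w$ of the limit problem and assert $\max_{t\ge 0}J_{\kappa,\infty}(tw) = c_\kappa^{\infty}$ along the multiplicative ray $t\mapsto tw$. For the functional at hand this is unjustified: after the change of variables the fiber map $t\mapsto J_{\kappa,\infty}(tw)$ involves $\int V_\infty |G^{-1}(tw)|^2\,dx$ and $\int |G^{-1}(tw)|^q\,dx$, which are not homogeneous in $t$, and there is no Nehari-type monotonicity guaranteeing that $t=1$ is the global maximum of the fiber map, nor that the mountain-pass level is attained along such rays. You also invoke without proof the least-energy characterization ``every nontrivial critical point of $J_{\kappa,\infty}$ has energy at least $c_\kappa^{\infty}$,'' which is itself a nontrivial fact for Berestycki--Lions type nonlinearities. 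The paper circumvents both issues with the Jeanjean--Tanaka device \cite{Jean-Tan}: it works directly with the translated limit $\widetilde v_\kappa$ (for which Fatou already gives $J_{\kappa,\infty}(\widetilde v_\kappa)\le c_\kappa$), uses the Pohozaev-type identity to show that along the dilation path $\gamma(t)=\widetilde v_\kappa(\cdot/t)$ one has $\tfrac{d}{dt}J_{\kappa,\infty}(\gamma(t)) = \tfrac{N-2}{2}\,t^{N-3}(1-t^2)\int|\nabla\widetilde v_\kappa|^2dx$, so that $\max_{t\ge 0}J_{\kappa,\infty}(\gamma(t)) = J_{\kappa,\infty}(\widetilde v_\kappa)$ exactly, and then uses $V\le V_\infty$, $V\not\equiv V_\infty$ to conclude $c_\kappa \le \max_t J_\kappa(\gamma(t)) < \max_t J_{\kappa,\infty}(\gamma(t)) \le c_\kappa$, a contradiction. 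To repair your argument you would need to replace the ray $tw$ by this dilation path and apply it to $\widetilde v_\kappa$ rather than to an abstract ground state; as written, the step $\max_{t}J_{\kappa,\infty}(tw)=c_\kappa^{\infty}$ does not follow from anything established in the paper.
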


\begin{proof} Our first goal is proving that  $v_\kappa$ is a weak solution. To this end, it suffices showing that
\[
J_\kappa'(v_\kappa)\psi=0 \,\,\, \forall \psi \in H^{1}(\mathbb{R}^N)
\]
or equivalently,
\[
\int_{\mathbb{R}^N} \Big[\nabla v_\kappa\nabla \psi + V(x)\frac{G^{-1}(v_\kappa)}{g(G^{-1}(v_\kappa))}\psi -\frac{|G^{-1}(v_\kappa)|^{q-2}G^{-1}(v_\kappa)}{g(G^{-1}(v_\kappa))}\psi
\Big]dx=0 \,\,\, \forall \psi \in H^{1}(\mathbb{R}^N).
\]
Once that $C_0^\infty(\mathbb{R}^N)$ is dense in $H^{1}(\mathbb{R}^N)$, it is sufficient to show the last equality only for functions belonging to $C_0^\infty(\mathbb{R}^N)$.

In what follows, for each $R>0$ we consider $\psi_R \in C_0^\infty(\mathbb{R}^N)$ verifying
\[
0 \leq \psi_R(x) \leq 1 \,\,\, \forall x \in \mathbb{R}^N, \,\,\, \psi_R(x)=1 \,\, \forall x \in B_R(0) \,\,\, \mbox{and} \,\,\, \psi_R(x)=0 \,\, \forall x \in B^{c}_{2R}(0).
\]
By \cite{Willem-1996},
\[
|v_n|\leq |z(x)| \,\,\, \mbox{for every} \,\, n \,\, \mbox{with} \,\,\, z \in L^q(B_{2R}(0)).
\]
Consequently,
\begin{eqnarray*}\begin{split}
\frac{G^{-1}(v_n)}{g(G^{-1}(v_n))}v_n \rightarrow \frac{G^{-1}(v_\kappa)}{g(G^{-1}(v_\kappa))}v_\kappa \mbox{ a.e. on } B_{2R}(0), \,\, \mbox{as } n\rightarrow \infty,\end{split}
\end{eqnarray*}
and \begin{eqnarray*}\begin{split}
\frac{|G^{-1}(v_n)|^{q-2}G^{-1}(v_n)}{g(G^{-1}(v_n))}v_n \rightarrow \frac{|G^{-1}(v_\kappa)|^{q-2}G^{-1}(v_\kappa)}{g(G^{-1}(v_\kappa))}v_\kappa \mbox{ a.e. on } \, B_{2R}(0), \,\, \mbox{as } n\rightarrow \infty. \end{split}
\end{eqnarray*}
Moreover, by Lemma \ref{l2.1},
\begin{eqnarray*}\begin{split}
\Big|V(x)\frac{G^{-1}(v_n)}{g(G^{-1}(v_n))}v_n\psi \Big|\leq 6 V_\infty|v_n|^{2}|\psi|\leq 6V_\infty|z(x)|^2|\psi|\end{split}
\end{eqnarray*}
and
\begin{eqnarray*}\begin{split}
\Bigg|V(x)\frac{|G^{-1}(v_n)|^{q-2}G^{-1}(v_n)}{g(G^{-1}(v_n))}\psi \Bigg|\leq 6^{\frac{q+1}{2}}V_\infty|v_n|^{q}|\psi|\leq 6^{\frac{q+1}{2}}V_\infty|z(x)|^{q}|\psi|.\end{split}
\end{eqnarray*}
Hence, by Lebesgue Dominated Theorem
\begin{equation} \label{E1}
\int_{\mathbb{R}^N}V(x)\frac{G^{-1}(v_n)}{g(G^{-1}(v_n))}v_n\psi\,dx \to \int_{\mathbb{R}^N} V(x)\frac{G^{-1}(v_\kappa)}{g(G^{-1}(v_\kappa))}v_\kappa\psi \,dx
\end{equation}
and
\begin{equation} \label{E2}
\int_{\mathbb{R}^N} \frac{|G^{-1}(v_n)|^{q-2}G^{-1}(v_n)}{g(G^{-1}(v_n))}v_n\psi\,dx \to \int_{\mathbb{R}^N}\frac{|G^{-1}(v_\kappa)|^{q-2}G^{-1}(v_\kappa)}{g(G^{-1}(v_\kappa))}v_\kappa\psi \,dx.
\end{equation}
The same type of arguments also can be use to prove the below limits
\begin{equation} \label{E3}
\int_{\mathbb{R}^N}V(x)\frac{G^{-1}(v_n)}{g(G^{-1}(v_n))}v_\kappa \psi\,dx \to \int_{\mathbb{R}^N} V(x)\frac{G^{-1}(v_\kappa)}{g(G^{-1}(v_\kappa))}v_\kappa\psi \,dx
\end{equation}
and
\begin{equation} \label{E4}
\int_{\mathbb{R}^N} \frac{|G^{-1}(v_n)|^{q-2}G^{-1}(v_n)}{g(G^{-1}(v_n))}v_\kappa \psi\,dx \to \int_{\mathbb{R}^N}\frac{|G^{-1}(v_\kappa)|^{q-2}G^{-1}(v_\kappa)}{g(G^{-1}(v_\kappa))}v_\kappa \psi \,dx.
\end{equation}
Now, the above limits combined with $J_\kappa'(v_n)(v_n\psi)=o_n(1)$ and $J_\kappa'(v_n)(v_\kappa\psi)=o_n(1)$ give
\[
\int_{\mathbb{R}^{N}}|\nabla v_n - \nabla v_\kappa|^{2}\psi_R(x) \, dx \to 0,
\]
from where it follows that
\[
\int_{B_R(0)}|\nabla v_n - \nabla v_\kappa|^{2} \, dx \to 0.
\]
Once that $R$ is arbitrary and $v_n \to v_\kappa$ in $L^{2}_{loc}(\mathbb{R}^{N})$, we conclude that $v_n \to v_\kappa$ in $H^{1}_{loc}(\mathbb{R}^{N})$. Thereby,
\[
J_\kappa'(v_n)\psi \to J_\kappa'(v_\kappa)\psi \,\,\ \forall \psi \in C_0^\infty(\mathbb{R}^N).
\]
Since $J_\kappa'(v_n)\psi=o_n(1)$, the last limit yields $J_\kappa'(v_\kappa)\psi=0$ for all $\psi \in C_0^\infty(\mathbb{R}^N)$, showing that $v_\kappa$ is a critical point for $J_\kappa$.

Now, next step is showing that $v_\kappa\not\equiv 0$. To prove this, we argue by contradiction supposing that $v_\kappa=0$. We claim that in this
 case, $\{v_n\}$ is also a Palais-Smale  sequence for functional $J_{\kappa,\infty}:H^1(\mathbb{R}^N)\rightarrow \mathbb{R}$ defined by
 \begin{equation} \begin{split}
 J_{\kappa,\infty}(v)=&\frac{1}{2}\int_{\mathbb{R}^N} |\nabla
 v|^2dx+\frac{1}{2}V_\infty\int_{\mathbb{R}^N}
 |G^{-1}(v)|^2dx-\frac{1}{q}\int_{\mathbb{R}^N}
 |G^{-1}(v)|^qdx. \end{split}\end{equation}
 Indeed, since $V(x)\rightarrow V_\infty$ as $|x|\rightarrow \infty$, $|G^{-1}(s)|\leq \sqrt{6}|s|$  and $v_n\rightarrow 0$ in $L^2_{loc}(\mathbb{R}^N)$,  we have
 \begin{equation}
 J_\kappa(v_n)-J_{\kappa,\infty}(v_n)=\frac{1}{2}\int_{\mathbb{R}^N}\Big[V(x)-V_\infty\Big]
 |G^{-1}(v_n)|^2dx\rightarrow 0.\end{equation}
On the other hand, recalling $\dfrac{|G^{-1}(s)|}{g(G^{-1}(s))}\leq 6|s|$, we have
 \begin{equation}\begin{split}
 \sup\limits_{\|\psi\|\leq 1} |\langle J_\kappa'(v_n)-J_{\kappa,\infty}'(v_n),\psi\rangle |=\sup\limits_{\|\psi\|\leq 1}\Big|\int_{\mathbb{R}^N}\Big[V(x)-V_\infty\Big]\frac{G^{-1}(v_n)}{g(G^{-1}(v_n))}\psi dx\Big|\rightarrow 0.
 \end{split}\end{equation}
 Next, we claim that for all $R>0$, the following vanishing  cannot occurs:
 \begin{equation}\label{3.10}
 \lim\limits_{n\rightarrow \infty}\sup\limits_{y\in \mathbb{R}^N}\int_{B_R(y)}|v_n|^2dx=0.
 \end{equation}
Suppose by contradiction that (\ref{3.10}) occurs,   then by a Lions' compactness lemma \cite{Lions},  $v_n\rightarrow 0$ in $L^q( \mathbb{R}^N)$ for any $q\in (2, 2^*)$.
So,
 \begin{eqnarray*}
 \lim\limits_{n\rightarrow \infty}\int_{\mathbb{R}^N}|G^{-1}(v_n)|^{q}dx\leq 6^{\frac{q}{2}}\lim\limits_{n\rightarrow }\int_{\mathbb{R}^N}|v_n|^qdx\rightarrow 0
 \end{eqnarray*}
and
 \begin{eqnarray*}
 \lim\limits_{n\rightarrow \infty}\int_{\mathbb{R}^N}\dfrac{|G^{-1}(v_n)|^{q-2}G^{-1}(v_n)}{g(G^{-1}(v_n))}v_ndx\leq 6^{\frac{q+1}{2}}\lim\limits_{n\rightarrow }\int_{\mathbb{R}^N}|v_n|^qdx\rightarrow 0.
 \end{eqnarray*}
Once that,
 \begin{eqnarray*}
 \lim\limits_{s\rightarrow 0}\dfrac{1}{s^2}\Bigg[|G^{-1}(s)|^2-\dfrac{G^{-1}(s)}{g(G^{-1}(s))}s\Bigg]=
 \lim\limits_{s\rightarrow \infty}\dfrac{1}{|s|^q}\Bigg[|G^{-1}(s)|^2-\dfrac{G^{-1}(s)}{g(G^{-1}(s))}s\Bigg]=0,
 \end{eqnarray*}
 we derive
 \begin{eqnarray*}\label{+3.0}
 \lim\limits_{n\rightarrow \infty}\int_{\mathbb{R}^N}\Big[|G^{-1}(v_n)|^2-\dfrac{G^{-1}(v_n)}{g(G^{-1}(v_n))}v_n\Big]dx=0.
 \end{eqnarray*}
Therefore, we deduce that
 \begin{eqnarray*} \begin{split} 2c_\kappa+o(1)=&2J_\kappa(v_n)-J_\kappa'(v_n)v_n\\=&\int_{\mathbb{R}^N}\Big[|G^{-1}(v_n)|^2-\dfrac{G^{-1}(v_n)}{g(G^{-1}(v_n))}v_n\Big]dx\\
 &-\frac{2}{q}\int_{\mathbb{R}^N}|G^{-1}(v_n)|^{q}dx+\int_{\mathbb{R}^N}\dfrac{|G^{-1}(v_n)|^{q-2}G^{-1}(v_n)}{g(G^{-1}(v_n))}v_ndx \rightarrow 0,\\
 \end{split}
 \end{eqnarray*}
  which is a contradiction, because $c_\kappa\geq a_0>0$.

  Thus, $\{v_n\}$ does not vanish and there exist $\alpha, R>0$, and $\{y_n\}\subset \mathbb{R}^N$ verifying
 \begin{equation}\label{2.17+}
 \lim\limits_{n\rightarrow \infty}\int_{B_R(y_n)}|v_n|^2dx\geq \alpha>0.
 \end{equation}
 Define $\widetilde{v}_n(x)=v_n(x+y_n)$. Since $\{v_n\}$ is a Palais-Smale sequence for $J_{\kappa,\infty}$, $\{\widetilde{v}_n\}$ is also a Palais-Smale sequence for $J_{\kappa,\infty}$. Arguing as in the case of $\{v_n\}$, we get that $\widetilde{v}_n \to \widetilde{v}_\kappa$ in $H^1_{loc}(\mathbb{R}^N)$ and $J_{\kappa,\infty}'(\widetilde{v}_\kappa)=0.$ Moreover, by  (\ref{2.17+}),  we also have $\widetilde{v}_\kappa\not=0.$  Henceforward, without loss of generality, we assume that
\[
\widetilde{v}_n (x) \to \widetilde{v}_\kappa(x) \,\,\, \mbox{and} \,\,\, \nabla \widetilde{v}_n(x) \to \nabla \widetilde{v}_\kappa(x) \,\,\, \mbox{a.e. on } \,\, \mathbb{R}^{N}.
\]
The last limits together with Fatous' Lemma lead to
\begin{equation} \label{EX} \begin{split}
2c_\kappa=&\limsup\limits_{n\rightarrow \infty}[2 J_{\kappa,\infty}(\widetilde{v}_n)-J_{\kappa,\infty}'(\widetilde{v}_n)G^{-1}(v_n)g(G^{-1}(\widetilde{v}_n))]\\=&-\limsup\limits_{n\rightarrow \infty}\int_{\mathbb{R}^N}\frac{G^{-1}(\widetilde{v}_n)g'(G^{-1}(\widetilde{v}_n))}{g(G^{-1}(\widetilde{v}_n))}|\nabla \widetilde{v}_n|^2dx\\&-\frac{(2-q)}{q}\limsup\limits_{n\rightarrow \infty}\int_{\mathbb{R}^N}
|G^{-1}(\widetilde{v}_n)|^qdx\\
\geq&-\int_{\mathbb{R}^N}\frac{G^{-1}(\widetilde{v}_\kappa)g'(G^{-1}(\widetilde{v}_\kappa))}{g(G^{-1}(\widetilde{v}_\kappa))}|\nabla \widetilde{v}_\kappa|^2     dx-\frac{(2-q)}{q}\int_{\mathbb{R}^N}
|G^{-1}(\widetilde{v}_\kappa)|^qdx\\
=&2 J_{\kappa,\infty}(\widetilde{v}_\kappa)-J_{\kappa,\infty}'(\widetilde{v}_\kappa)G^{-1}(\widetilde{v}_\kappa)g(G^{-1}(\widetilde{v}_\kappa))\\=&2 J_{\kappa,\infty}(\widetilde{v}_\kappa),
\end{split}
\end{equation}
that is, $J_{\kappa,\infty}(\widetilde{v}_\kappa)\leq c_\kappa.$ Now, as in \cite{Jean-Tan}, we  define
 \begin{eqnarray*}\label{3.21}
 \widetilde{v}_{\kappa,t}(x)=\begin{cases} \widetilde{v}_\kappa(x/t),\quad &\text{ if } t>0,\\
 0, \quad &\text{ if } t=0.
 \end{cases}
 \end{eqnarray*}
 Then,
 \begin{eqnarray*}\int_{\mathbb{R}^N}|\nabla \widetilde{v}_{\kappa,t}|^2dx=t^{N-2}\int_{\mathbb{R}^N}|\nabla \widetilde{v}_\kappa|^2dx,\quad \int_{\mathbb{R}^N}|G^{-1}(\widetilde{v}_{\kappa,t})|^2dx=t^{N}\int_{\mathbb{R}^N}|G^{-1}(\widetilde{v}_\kappa)|^2dx,
 \end{eqnarray*}
  and
 \begin{eqnarray*} \int_{\mathbb{R}^N}|G^{-1}(\widetilde{v}_{\kappa,t})|^qdx=t^{N}\int_{\mathbb{R}^N}|G^{-1}(\widetilde{v}_\kappa)|^qdx.
 \end{eqnarray*}
Since $J_\infty'(\widetilde{v}_\kappa)=0$, elliptic regularity implies that $\widetilde{v}_\kappa \in C^{2}(\mathbb{R}^{N})$. Hence, 
\[
\dfrac{d}{dt}J_{\kappa,\infty}(\widetilde{v}_{\kappa,t})\Big|_{t=1}=0,
\]
leading to
\begin{eqnarray}\label{5+.1}\begin{split}\dfrac{(N-2)}{2N}\int_{\mathbb{R}^N}|\nabla \widetilde{v}_\kappa|^2dx=&-\dfrac{V_\infty}{2}\int_{\mathbb{R}^N}|G^{-1}(\widetilde{v}_\kappa)|^2dx+\dfrac{1}{q}\int_{\mathbb{R}^N}|G^{-1}(\widetilde{v}_\kappa)|^qdx.
 \end{split}\end{eqnarray}
Setting $\gamma(t)(x)=\widetilde{v}_{\kappa,t}(x)$, we see that
 \begin{eqnarray*}\begin{split}J_{\kappa,\infty}(\gamma(t))=&\dfrac{t^{N-2}}{2}\int_{\mathbb{R}^N}|\nabla \widetilde{v}_\kappa|^2dx-t^N\Big[-\dfrac{V_\infty}{2}\int_{\mathbb{R}^N}|G^{-1}(\widetilde{v}_\kappa)|^2dx
 \\&+\dfrac{1}{q}\int_{\mathbb{R}^N}|G^{-1}(\widetilde{v}_\kappa)|^{q}dx\Big].
 \end{split}\end{eqnarray*}
 Thus $\gamma \in C([0,\infty),H^1(\mathbb{R}^N))$ and
 \begin{eqnarray*}\begin{split}\dfrac{d}{dt}J_{\kappa,\infty}(\gamma(t))=&\dfrac{N-2}{2}t^{N-3}\int_{\mathbb{R}^N}|\nabla \widetilde{v}_\kappa|^2dx
 -Nt^{N-1}\Big[-\dfrac{V_\infty}{2}\int_{\mathbb{R}^N}|G^{-1}(\widetilde{v}_\kappa)|^2dx\\&+
 \dfrac{1}{q}\int_{\mathbb{R}^N}|G^{-1}(\widetilde{v}_\kappa)|^{q}dx\Big]\\
 =&\dfrac{(N-2)}{2}t^{N-3}(1-t^2)\int_{\mathbb{R}^N}|\nabla \widetilde{v}_\kappa|^2dx.
 \end{split}\end{eqnarray*}
 So, $\dfrac{d}{dt}J_{\kappa,\infty}(\gamma(t))>0$ for $t\in (0,1)$ and $\dfrac{d}{dt}J_{\kappa,\infty}(\gamma(t))<0$ for $t>1$ implying that
\[
\max\limits_{t \geq 0}J_{\kappa,\infty}({\gamma}(t))=J_{\kappa,\infty}(\widetilde{v}_\kappa).
\]
Furthermore, $J_{\kappa,\infty}(\gamma(L))<0$ for sufficiently large $L>1$, showing that \linebreak $\widehat{\gamma}(t)=\gamma(Lt)$ belongs to $\Gamma_\kappa$.
Thereby
$$
c_\kappa \leq \max\limits_{t\in [0,1]}J_\kappa(\gamma(t)):=J_\kappa(\gamma(\bar{t}))<J_{\kappa,\infty}(\gamma(\bar{t}))\leq \max\limits_{t\in [0,1]}J_{\kappa,\infty}(\gamma(t))=J_{\kappa,\infty}(\widetilde{v}_\kappa)\leq c_\kappa,
$$
which is a contradiction. This way, $v_\kappa$ is a nontrivial critical point for $J$. Moreover, repeating the same type of arguments explored in (\ref{EX}), we have that $J_\kappa(v_\kappa) \leq c_\kappa$. \end{proof}

\section{$L^\infty$  estimate  of the solution}

In this section, we will establish an $L^\infty$ estimate for  solution $v_\kappa$ obtained in Theorem \ref{criticialpoint}. Indeed, by standard elliptic regularity estimate \cite{Gi-Tr-01}, $v_\kappa \in L^\infty(\mathbb{R}^N)$. However, this boundedness is not enough to prove our results. In the following, we will prove an $L^\infty$ estimate dependent on $\kappa>0$. To this end, firstly  we need to give an uniform  boundedness of the Sobolev norm independent on $\kappa>0$ for $v_\kappa$ .
\begin{lemma}\label{L2.5} The solution $v_\kappa$ satisfies $\|v_\kappa \|^2 \leq \dfrac{2qc_\kappa}{q-2}$.
\end{lemma}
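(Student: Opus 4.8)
The plan is to obtain the bound $\|v_\kappa\|^2 \leq \frac{2qc_\kappa}{q-2}$ by combining the energy identity from the Mountain Pass level with the Nehari-type identity coming from the specific test function $\psi = G^{-1}(v_\kappa)g(G^{-1}(v_\kappa))$, exactly as was already done for the Palais--Smale sequence in Lemma \ref{L3.3}. The point is that $v_\kappa$ is now a genuine critical point (so $J_\kappa'(v_\kappa)=0$ exactly, not merely up to $o(1)$) and satisfies $J_\kappa(v_\kappa) \leq c_\kappa$ by Theorem \ref{criticialpoint}. First I would record the two exact identities: $J_\kappa(v_\kappa) \leq c_\kappa$, and $J_\kappa'(v_\kappa)\big(G^{-1}(v_\kappa)g(G^{-1}(v_\kappa))\big)=0$.

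Next I would form the combination $qJ_\kappa(v_\kappa) - J_\kappa'(v_\kappa)\big(G^{-1}(v_\kappa)g(G^{-1}(v_\kappa))\big)$, which kills the $L^q$ terms. Writing it out, the gradient part becomes
\begin{equation*}
\frac{q}{2}\int_{\mathbb{R}^N}|\nabla v_\kappa|^2\,dx - \int_{\mathbb{R}^N}\Big(1+\frac{G^{-1}(v_\kappa)}{g(G^{-1}(v_\kappa))}g'(G^{-1}(v_\kappa))\Big)|\nabla v_\kappa|^2\,dx,
\end{equation*}
and the potential part becomes $\big(\tfrac{q}{2}-1\big)\int_{\mathbb{R}^N} V(x)|G^{-1}(v_\kappa)|^2\,dx$. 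Here I invoke Lemma \ref{l2.1}$-$(4), which gives $-\tfrac12 \leq \tfrac{t}{g(t)}g'(t) \leq 0$, so that the coefficient $1+\tfrac{G^{-1}(v_\kappa)}{g(G^{-1}(v_\kappa))}g'(G^{-1}(v_\kappa))$ lies in $[\tfrac12,1]$ and in particular is $\leq 1$. Therefore the subtracted gradient term is at least $-\int_{\mathbb{R}^N}|\nabla v_\kappa|^2\,dx$, leaving at least $\big(\tfrac{q}{2}-1\big)\int_{\mathbb{R}^N}|\nabla v_\kappa|^2\,dx$. Collecting both pieces yields
\begin{equation*}
qc_\kappa \geq qJ_\kappa(v_\kappa) \geq \frac{q-2}{2}\int_{\mathbb{R}^N}\big(|\nabla v_\kappa|^2 + V(x)|G^{-1}(v_\kappa)|^2\big)\,dx.
\end{equation*}

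Finally, I would pass from $|G^{-1}(v_\kappa)|^2$ back to $|v_\kappa|^2$ using Lemma \ref{l2.1}$-$(3), namely $t \leq G^{-1}(t)$ for $t\geq0$ (and oddness), so that $\int V(x)|G^{-1}(v_\kappa)|^2\,dx \geq \int V(x)|v_\kappa|^2\,dx$. Hence the right-hand side dominates $\frac{q-2}{2}\|v_\kappa\|^2$, giving $qc_\kappa \geq \frac{q-2}{2}\|v_\kappa\|^2$, which is the claimed estimate after rearranging. This argument is essentially a cleaner, exact version of the computation already carried out in the proof of Lemma \ref{L3.3}, so there is no serious obstacle; the only point requiring a little care is the admissibility of the test function $\psi = G^{-1}(v_\kappa)g(G^{-1}(v_\kappa))$ in $H^1(\mathbb{R}^N)$, which follows from the gradient and pointwise bounds \eqref{3.2h} and \eqref{3.2hhh} exactly as before, and the correct handling of the sign in Lemma \ref{l2.1}$-$(4).
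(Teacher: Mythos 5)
Your proof is correct and follows essentially the same route as the paper: it forms $qJ_\kappa(v_\kappa)-J_\kappa'(v_\kappa)\big(G^{-1}(v_\kappa)g(G^{-1}(v_\kappa))\big)$, uses Lemma \ref{l2.1}$-$(4) to control the gradient coefficient and Lemma \ref{l2.1}$-$(3) to pass from $|G^{-1}(v_\kappa)|^2$ to $|v_\kappa|^2$, exactly as in the proof of Lemma \ref{L3.3}. If anything, you are slightly more careful than the paper, which writes $qc_\kappa=qJ(v_\kappa)-J'(v_\kappa)(\cdots)$ where the correct relation is an inequality $qc_\kappa\geq qJ_\kappa(v_\kappa)$ coming from $J_\kappa(v_\kappa)\leq c_\kappa$.
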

\begin{proof}  Using the hypothesis that $v_\kappa$ is a critical point of $J_\kappa$, 
\begin{equation*}\label{3.17}\begin{split}
qc_\kappa=&q J(v_\kappa)-J'(v_\kappa)G^{-1}(v_\kappa)g(G^{-1}(v_\kappa))\\ \geq &\frac{(q-2)}{2}\int_{\mathbb{R}^N} |\nabla
v_\kappa|^2dx+\frac{(q-2)}{2}\int_{\mathbb{R}^N} V(x)|G^{-1}(v_\kappa)|^2dx,\\
\end{split}\end{equation*}
from where it follows that,
\[
\|v_\kappa\|^2 \leq \frac{2qc_\kappa}{q-2}.
\]
\end{proof}

From now on, we consider the functional
\begin{eqnarray*}\label{3.5}\begin{split}P_\infty(v)=3\int_{\mathbb{R}^N}(|\nabla v|^2+V_\infty v^2)dx-\frac{1}{q}\int_{\mathbb{R}^N}|v|^qdx
\end{split}\end{eqnarray*}
and the set
$$
{\Gamma}^{\infty}=\{\gamma\in C([0,1],H^1(\mathbb{R}^N)): \gamma(0)=0, \gamma(1)\not=0, P_\infty(\gamma(1))<0\}.
$$
By Lemma \ref{l2.1}$-$(3), we have $J_\kappa(v)\leq P_\infty(v)$ and thus ${\Gamma}^{\infty}\subset \Gamma_\kappa.$ Therefore
$$
c_\kappa=\inf\limits_{\gamma \in \Gamma_\kappa}\sup\limits_{t\in [0,1]}J_\kappa(\gamma(t))\leq \inf\limits_{\gamma \in {\Gamma}^{\infty}}\sup\limits_{t\in [0,1]}J_\kappa(\gamma(t))\leq \inf\limits_{\gamma \in {\Gamma}^{\infty}}\sup\limits_{t\in [0,1]}P_\infty(\gamma(t)):=d_\infty,
$$
where $d_\infty$ is independent on $\kappa.$ Consequently, by Lemma \ref{L2.5},  the solution $v_\kappa$ must satisfy the estimate
\begin{equation} \label{Z6}
\|v_\kappa\|^2\leq \dfrac{2q{d_\infty}}{q-2}.
\end{equation}

\begin{proposition} \label{P4}
There exists a constant $C_0>0$ independent on $\kappa$, such that $\|v_\kappa\|_\infty\leq C_0 \kappa^{-\frac{1}{4}}$ for $\kappa\leq 6^{\frac{2}{q-2^*}}$.
\end{proposition}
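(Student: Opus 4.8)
The plan is to regard $v_\kappa$ as a weak (indeed classical) solution of the semilinear equation
\[
-\Delta v_\kappa = h(x,v_\kappa), \qquad h(x,s):=-V(x)\frac{G^{-1}(s)}{g(G^{-1}(s))}+\frac{|G^{-1}(s)|^{q-2}G^{-1}(s)}{g(G^{-1}(s))},
\]
and to run a Moser iteration on it. The decisive observation is that the two estimates coming from Lemma \ref{l2.1}, namely $|G^{-1}(s)|\le\sqrt6\,|s|$ and $\frac{|G^{-1}(s)|}{g(G^{-1}(s))}\le 6|s|$ (using $g\ge\sqrt{1/6}$), are completely independent of $\kappa$. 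Hence one gets the $\kappa$-independent subcritical bound $|h(x,s)|\le 6V_\infty|s|+6^{q/2}|s|^{q-1}$ for all $x,s$ and all $\kappa>0$. Combined with the uniform energy bound \re{Z6}, which gives $\|v_\kappa\|\le(2qd_\infty/(q-2))^{1/2}$ and therefore $\|v_\kappa\|_{2^*}\le M$ with $M$ independent of $\kappa$, this reduces the whole statement to a quantitative $L^\infty$ estimate for a subcritical semilinear problem.

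First I would fix $s\ge1$ and use $v_\kappa|v_\kappa|^{2(s-1)}$ (truncated at level $L$ to keep the test function in $H^1$, then letting $L\to\infty$) as a test function. Using $\int|v_\kappa|^{2(s-1)}|\nabla v_\kappa|^2=s^{-2}\int|\nabla|v_\kappa|^s|^2$ together with the Sobolev inequality, the nonlinearity bound yields a recursive inequality of the form
\[
\|v_\kappa\|_{2^*s}^{2s}\le C_S\,s\big[\,6V_\infty\|v_\kappa\|_{2s}^{2s}+6^{q/2}\|v_\kappa\|_{2s+q-2}^{2s+q-2}\,\big].
\]
The subcritical term is controlled by interpolating $L^{2s+q-2}$ between $L^{2s}$ and $L^{2^*s}$ (possible precisely because $q<2^*$, so $2s+q-2<2^*s$ for $s\ge1$) and absorbing the top norm by Young's inequality; starting the iteration at $p_0=2^*$ gives exponents $p_n\to\infty$ geometrically. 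Summing the resulting estimates, the accumulated constant $\prod_n(C_S\,s_n\,6^{q/2})^{1/p_n}$ converges and the exponent on $\|v_\kappa\|_{2^*}$ stays finite, so the iteration closes in a bound $\|v_\kappa\|_\infty\le C_1$ with $C_1=C_1(N,q,V_\infty,M)$ that does not depend on $\kappa$.

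To finish, since $2<q<2^*$ we have $q-2^*<0$ and hence $6^{2/(q-2^*)}<1$; thus the hypothesis $\kappa\le 6^{2/(q-2^*)}$ forces $\kappa<1$ and $\kappa^{-1/4}>1$, whence $\|v_\kappa\|_\infty\le C_1\le C_1\kappa^{-1/4}$, which is the claim with $C_0=C_1$. (In fact the argument proves the stronger uniform bound $\|v_\kappa\|_\infty\le C_1$; the weaker power-of-$\kappa$ form is all that is needed afterwards, because it must only be compared against the level $\sqrt{1/(3\kappa)}\sim\kappa^{-1/2}$ at which $g$ changes definition.) The main obstacle is the bookkeeping of constants in the iteration: one must verify that every quantity produced — the Sobolev constant, the interpolation and Young constants, and the infinite product over the iteration — depends only on $N$, $q$, $V_\infty$ and the uniform bound $M$, and never on $\kappa$. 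This $\kappa$-uniformity is exactly what Lemma \ref{l2.1} delivers, and it is the whole reason for modifying $g$ so that $g\in(\sqrt{1/6},1]$ with bounds independent of $\kappa$.
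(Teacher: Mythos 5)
Your proof is correct and rests on the same engine as the paper's: a Moser-type iteration for the transformed semilinear equation (\ref{1.00}), fuelled by the $\kappa$-uniform bounds $|G^{-1}(s)|\le\sqrt6\,|s|$ and $g\ge\sqrt{1/6}$ from Lemma \ref{l2.1} together with the uniform energy bound (\ref{Z6}). The interesting divergence is in how the power of $\kappa$ arises. You keep the sharp lower bound $g>\sqrt{1/6}$ throughout and therefore obtain the stronger, $\kappa$-independent conclusion $\|v_\kappa\|_\infty\le C_1$, deducing the stated $C_0\kappa^{-1/4}$ only a posteriori from $\kappa\le 6^{2/(q-2^*)}<1$. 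The paper instead deliberately degrades the estimate: under the same hypothesis on $\kappa$ it replaces $g\ge\sqrt{1/6}$ by $g\ge\kappa^{\theta}$ with $\theta=\frac{2^*-q}{4}$, and the resulting factor $\kappa^{-\theta}$ propagates through the iteration to produce exactly $\kappa^{-\theta/(2(\sigma-1))}=\kappa^{-1/4}$, where $\sigma-1=\frac{2^*-q}{2}$. Both versions suffice for Theorem \ref{th1.1}, since all that is needed afterwards is $\sqrt6\,\|v_\kappa\|_\infty<\sqrt{1/(3\kappa)}$ for $\kappa$ small; your version makes clear that the exponent $-1/4$ is an artifact of the bookkeeping rather than intrinsic. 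The remaining differences are cosmetic: the paper controls the nonlinear term by a single H\"{o}lder step against the fixed quantity $\|v\|_{2^*}^{q-2}$, which makes the infinite product over the iteration completely explicit, whereas you interpolate $L^{2s+q-2}$ between $L^{2s}$ and $L^{2^*s}$ and absorb the top norm by Young's inequality; both routes close precisely because $q<2^*$, and both leave every constant depending only on $N$, $q$, $V_\infty$ and the energy level $d_\infty$.
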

\begin{proof}
In what follows, we denote $v_\kappa$ by $v$. For each $m\in \mathrm{N}$ and $\beta>1$, let $A_m=\{x\in \mathbb{R}^N:|v|^{\beta-1}\leq m\}$ and $B_m=\mathbb{R}^N\setminus A_m.$
Define
\begin{equation*}v_m=\begin{cases}
v|v|^{2(\beta-1)},&\quad \text{ in }  A_m,\\
m^2v,&\quad \text{ in } B_m.
\end{cases}\end{equation*}
Note that $v_m\in H^1(\mathbb{R}^N)$, $v_m\leq |v|^{2\beta-1}$ and
\begin{equation} \label{4.1}\nabla v_m=\begin{cases}
(2\beta-1)|v|^{2(\beta-1)}\nabla v,&\quad \text{ in }  A_m,\\
m^2\nabla v,&\quad \text{ in } B_m.
\end{cases}\end{equation}
Using $v_m$ as a test function in (\ref{1.00}), we deduce that
\begin{equation}\label{s3-}
\int_{\mathbb{R}^N}\Big[\nabla v \nabla v_m+V(x)\frac{G^{-1}(v)}{g(G^{-1}(v))}v_m\Big]dx=\int_{\mathbb{R}^N}\frac{|G^{-1}(v)|^{q-2}G^{-1}(v)}{g(G^{-1}(v))}v_mdx.
\end{equation}
By (\ref{s3-}),
\begin{equation}\label{s3}
\int_{\mathbb{R}^N}\nabla v \nabla v_mdx=(2\beta-1)\int_{A_m}|v|^{2(\beta-1)}|\nabla v|^2dx+m^2\int_{B_m}|\nabla v|^2dx.
\end{equation}
Let
\begin{equation*}w_m=\begin{cases}
v|v|^{\beta-1},&\quad \text{ in }  A_m,\\
mv,&\quad \text{ in } B_m.
\end{cases}\end{equation*}
Then $w_m^2=vv_m\leq |v|^{2\beta}$ and
\begin{equation*}\nabla w_m=\begin{cases}
\beta|v|^{\beta-1}\nabla v,&\quad \text{ in }  A_m,\\
m\nabla v,&\quad \text{ in } B_m.
\end{cases}\end{equation*}
Hence,
\begin{equation}\label{s4}
\int_{\mathbb{R}^N}|\nabla w_m|^2dx=\beta^2\int_{A_m}|v|^{2(\beta-1)}|\nabla v|^2dx+m^2\int_{B_m}|\nabla v|^2dx.
\end{equation}
Then, from (\ref{s3}) and (\ref{s4}),
\begin{equation}\label{s5}
\int_{\mathbb{R}^N}(|\nabla w_m|^2-\nabla v \nabla v_m)dx=(\beta-1)^2\int_{A_m}|v|^{2(\beta-1)}|\nabla v|^2dx.
\end{equation}
Combing (\ref{s3-}), (\ref{s3}) and (\ref{s5}), since $\beta>1$, we have
\begin{eqnarray*}\begin{split}
\int_{\mathbb{R}^N}|\nabla w_m|^2dx\leq&\Big[\frac{(\beta-1)^2}{2\beta-1}+1\Big]\int_{\mathbb{R}^N}\nabla v\nabla v_mdx\\
\leq& \beta^2\int_{\mathbb{R}^N}\Big[\nabla v\nabla v_m +V(x) \frac{G^{-1}(v)}{g(G^{-1}(v))}v_m     \Big]dx\\
=&\beta^2\int_{\mathbb{R}^N}\frac{|G^{-1}(v)|^{q-2}G^{-1}(v)}{g(G^{-1}(v))}v_mdx.
\end{split}\end{eqnarray*}
Choosing $\kappa^{\frac{2^*-q}{4}}\leq\sqrt{\frac{1}{6}}$, we have $g(t)\geq \kappa^{\frac{2^*-q}{4}}$. Setting $\theta=\frac{2^*-q}{4}$, by Sobolev inequality and Lemma \ref{l2.1}$-$(3),
\begin{eqnarray*}\begin{split}
\Big(\int_{A_m}| w_m|^{2^*}dx\Big)^{(N-2)/N}\leq S\int_{\mathbb{R}^N}|\nabla w_m|^2dx\leq& 6^{\frac{q-1}{2}}S\beta^2\kappa^{-\theta}\int_{\mathbb{R}^N}|v|^{q-2}w_m^2dx.
\end{split}\end{eqnarray*}
By H\"{o}lder inequality, we have
\begin{eqnarray*}\begin{split}
\Big(\int_{A_m}| w_m|^{2^*}dx\Big)^{(N-2)/N}\leq  6^{\frac{q-1}{2}}S\beta^2\kappa^{-\theta}\|v\|_{2^*}^{q-2}\Big(\int_{\mathbb{R}^N}|w_m|^{2q_1}dx\Big)^{1/q_1}
\end{split}\end{eqnarray*}
where $1/q_1+(q-2)/2^*=1$. Since $|w_m|\leq |v|^\beta$ in $\mathbb{R}^N$ and $|w_m|=|v|^\beta$ in $A_m$, we have
\begin{eqnarray*}\begin{split}
\Big(\int_{A_m}| v|^{\beta2^*}dx\Big)^{(N-2)/N}\leq  6^{\frac{q-1}{2}}S\beta^2\kappa^{-\theta}\|v\|_{2^*}^{q-2}\Big(\int_{\mathbb{R}^N}|v|^{2\beta q_1}dx\Big)^{1/q_1}
\end{split}\end{eqnarray*}
By Monotone Convergence Theorem, letting $m\rightarrow \infty,$ we have
\begin{eqnarray}\label{s7}\begin{split}
\|v\|_{\beta 2^*}\leq  \beta^{1/\beta}(6^{\frac{q-1}{2}}S\kappa^{-\theta}\|v\|_{2^*}^{q-2})^{1/(2\beta)}\|v\|_{2\beta q_1}.
\end{split}\end{eqnarray}
Setting $\sigma=2^*/(2q_1)$ and $\beta=\sigma$ in (\ref{s7}), we obtain $2q_1\beta=2^*$ and
\begin{eqnarray}\label{s8++}\begin{split}
\|v\|_{\sigma 2^*}\leq  \sigma^{1/\sigma}(6^{\frac{q-1}{2}}S\kappa^{-\theta}\|v\|_{2^*}^{q-2})^{1/(2\sigma)}\|v\|_{2^*}.
\end{split}\end{eqnarray}
Taking $\beta=\sigma^2$ in (\ref{s7}), we have
\begin{eqnarray}\label{s9}\begin{split}
\|v\|_{\sigma^2 2^*}\leq  \sigma^{2/\sigma^2}(6^{\frac{q-1}{2}}S\kappa^{-\theta}\|v\|_{2^*}^{q-2})^{1/(2\sigma^2)}\|v\|_{\sigma2^*}.
\end{split}\end{eqnarray}
From (\ref{s8++}) and (\ref{s9}), 
\begin{eqnarray*}\label{s10}\begin{split}
\|v\|_{\sigma^2 2^*}\leq  \sigma^{1/\sigma+2/\sigma^2}(6^{\frac{q-1}{2}}S\kappa^{-\theta}\|v\|_{2^*}^{q-2})^{1/2(1/\sigma+1/\sigma^2)}\|v\|_{2^*}.
\end{split}\end{eqnarray*}
Taking $\beta=\sigma^i \ (i=1,2,\cdots)$ and iterating (\ref{s7}), we get
\begin{eqnarray*}\label{s10}\begin{split}
\|v\|_{\sigma^j 2^*}\leq  \sigma^{\sum\limits_{i=1}^j\frac{i}{\sigma^i}}(6^{\frac{q-1}{2}}S\kappa^{-\theta}\|v\|_{2^*}^{q-2})^{\frac{1}{2}\sum\limits_{i=1}^j\frac{1}{\sigma^i}}\|v\|_{2^*}.
\end{split}\end{eqnarray*}
Therefore, by Sobolev inequality, (\ref{Z6})  and taking the limit of $j\rightarrow +\infty$,   we get
$$\|v\|_\infty\leq \sigma^{\frac{1}{(\sigma-1)^2}}(6^{\frac{q-1}{2}}\kappa^{-\theta}S^{q/2}C^{(q-2)/2})^{\frac{1}{2(\sigma-1)}}S^{1/2}C^{1/2}=C_0\kappa^{-\frac{1}{4}},  \mbox{ for } \kappa\leq 6^{\frac{2}{q-2^*}},$$
where $C_0>0$ is independent of $\kappa>0$. This ends the proof.
\end{proof}

\subsection{Proof of Theorem \ref{th1.1}. }
Combining the arguments in Section 2 and  Proposition \ref{P4}, the solution $v_\kappa$ of (\ref{1.0}) established in Theorem \ref{criticialpoint} satisfies $\|v_\kappa\|_\infty\leq C_0\kappa^{-\frac{1}{4}}$ for $\kappa\leq 6^{\frac{2}{q-2^*}}$. Choosing $\kappa_0=\min\Big\{6^{\frac{2}{q-2^*}},\dfrac{1}{C_0\sqrt{18}}\Big\}$, it follows that
\[
\|G^{-1}(v_\kappa)\|_\infty \leq \sqrt{6}\|v_\kappa\|_{\infty}< \sqrt{\dfrac{1}{3\kappa}} \,\,\ \forall \kappa \in [0, \kappa_0).
\]
From this, $u=G^{-1}(v_\kappa)$ is a classical solution of (\ref{1.1}).

\section{Proof of Theorem \ref{th1.2}}
In this section, we fix $0<\kappa<\frac{1}{3}$ and  for equation (\ref{1.0}), we let \begin{equation*}g(t)=\begin{cases}
\sqrt{1-\kappa t^2},&\quad \text{ if } \quad0\leq t<1<\sqrt{\dfrac{1}{3\kappa}},\\
\dfrac{\kappa }{t\sqrt{1-\kappa}}+\dfrac{1-2\kappa }{\sqrt{1-\kappa}},&\quad \text{ if }\quad  t\geq1
.\end{cases}\end{equation*}
Setting $g(t)=g(-t)$ for all $t\leq 0, $ clearly $g \in C^1(\mathbb{R},(\frac{1-2\kappa}{\sqrt{1-\kappa}},1])$, $g$ increases in $(-\infty,0)$ and decreases  in $[0,+\infty)$.

We further modify the nonlinearity of equation (\ref{1.1++})  as follows:
\begin{equation*}f(t)=\begin{cases}
0,&\quad \text{ if } \quad t\leq0,\\
\Big[1-\dfrac{1}{(1+t^2)^3}\Big]t,&\quad \text{ if }\quad 0\leq t\leq 1,\\
\dfrac{7}{8}t^{q-1},&\quad \text{ if }\quad t\geq 1,\end{cases}\end{equation*}
where $ 2 < q < \min\{\frac{14}{5},2^{*}\}$ and fix $l(t)=f(t)$ in (\ref{1.0}).
 We note that $f$ is continuous and satisfies the following conditions:
 \begin{itemize}
\item[$(f_1)$] $f(0)=0;$
\item[$(f_2)$] $\lim\limits_{t\rightarrow 0}\dfrac{f(t)}{t}=0;$
\item[$(f_3)$]  $\lim\limits_{t\rightarrow +\infty}\dfrac{f(t)}{t^{q-1}}=\dfrac{7}{8};$
\item[$(f_4)$]  $\lim\limits_{t\rightarrow +\infty}\dfrac{f(t)}{t}=+\infty;$
\item[$(f_5)$]  $2F(t)-f(t)t\leq 0$ with $ t\in \mathbb{R}$,  where  $F(t)=\int_{0}^tf(s)ds$;
\item[$(f_6)$]  $f(t)\leq 7t^{q-1}$ with $ t \geq 0$.
\end{itemize}
We make   change of variables $$v=G(u)=\int_0^ug(t)dt.$$ Then, at this moment, the inverse function $G^{-1}(t)$ satisfies the following properties:
\begin{lemma}  \label{l4.1}  \begin{itemize}
\item [$(1)$] $\lim\limits_{t\rightarrow 0}\dfrac{G^{-1}(t)}{t}=1$;
\item [$(2)$] $\lim\limits_{t\rightarrow \infty}\dfrac{G^{-1}(t)}{t}=\dfrac{\sqrt{1-\kappa}}{1-2\kappa}$;
\item [$(3)$] $t\leq G^{-1}(t)\leq 3t,$ for all $t\geq 0;$
\item [$(4)$] $-\dfrac{3}{2}\leq\dfrac{t}{g(t)}g'(t)\leq 0,$ for all $t\geq 0.$
\end{itemize}\end{lemma}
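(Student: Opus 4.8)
The plan is to mirror the proof of Lemma~\ref{l2.1}, since the new $g$ retains the three structural features that drove that argument: it is continuous and positive, it satisfies $g(0)=1$, and it is strictly decreasing on $[0,\infty)$ with infimum $m_\kappa:=\frac{1-2\kappa}{\sqrt{1-\kappa}}$ approached as $t\to\infty$. The only genuinely new point is that the extreme values of $g$ are now $1$ and $m_\kappa$ rather than $1$ and $\sqrt{1/6}$, so the numerical constants $\sqrt6$ and $-\tfrac12$ appearing in Lemma~\ref{l2.1} must be recomputed and checked against the range $\kappa\in(0,\tfrac13)$.

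For (1) and (2) I would use the inverse-function identity $(G^{-1})'(t)=1/g(G^{-1}(t))$, valid because $g>0$. Since $G^{-1}(0)=0$, part (1) is simply the value of the derivative of $G^{-1}$ at the origin: $\lim_{t\to0}G^{-1}(t)/t=(G^{-1})'(0)=1/g(0)=1$. For (2) I first note that $G(s)\le s$ forces $G^{-1}(t)\ge t\to\infty$, so $G^{-1}(t)\to\infty$; then by L'H\^opital $\lim_{t\to\infty}G^{-1}(t)/t=\lim_{t\to\infty}1/g(G^{-1}(t))=1/m_\kappa=\frac{\sqrt{1-\kappa}}{1-2\kappa}$, using that $\lim_{s\to\infty}g(s)=m_\kappa$.

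For (3) the monotonicity of $g$ yields the sandwich $m_\kappa\,t\le g(t)\,t\le G(t)\le t$ for $t\ge0$, coming from $m_\kappa\le g(s)\le g(0)=1$. Substituting $t=G^{-1}(s)$ turns this into $s\le G^{-1}(s)\le m_\kappa^{-1}s$, which already gives the lower bound $G^{-1}(s)\ge s$. The upper bound $G^{-1}(s)\le 3s$ then reduces to the purely numerical claim $m_\kappa^{-1}=\frac{\sqrt{1-\kappa}}{1-2\kappa}\le 3$ on $(0,\tfrac13)$; squaring reduces this to $36\kappa^2-35\kappa+8\ge0$, whose smaller root is $\frac{35-\sqrt{73}}{72}\approx0.367>\tfrac13$, so the inequality holds throughout $(0,\tfrac13)$.

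Part (4) is a direct piecewise computation. On $[0,1)$, where $g(t)=\sqrt{1-\kappa t^2}$, one finds $\frac{t}{g(t)}g'(t)=\frac{-\kappa t^2}{1-\kappa t^2}$, which is nonpositive and decreasing in $t$, hence bounded below by its value as $t\to1^-$, namely $-\frac{\kappa}{1-\kappa}$. On $[1,\infty)$, where $g(t)=\frac{\kappa}{t\sqrt{1-\kappa}}+\frac{1-2\kappa}{\sqrt{1-\kappa}}$, one computes $\frac{t}{g(t)}g'(t)=\frac{-\kappa}{\sqrt{1-\kappa}\,t\,g(t)}$; since $t\,g(t)$ is increasing on $[1,\infty)$ with $t\,g(t)\ge g(1)=\sqrt{1-\kappa}$, this is again $\ge-\frac{\kappa}{1-\kappa}$, and the two expressions agree at $t=1$, confirming $g\in C^1$. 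Because $\kappa<\tfrac13$ gives $\frac{\kappa}{1-\kappa}<\tfrac12$, the quantity lies in $[-\tfrac12,0]\subset[-\tfrac32,0]$, so the stated (deliberately loose) bound holds comfortably. The only care-points, rather than real obstacles, are thus the two $\kappa$-uniform verifications in (3) and (4); both are elementary once the two branches of $g$ are written out, and everything else transfers essentially verbatim from Lemma~\ref{l2.1}.
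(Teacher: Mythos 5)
Your proof is correct and follows essentially the same route as the paper's (very terse) argument: the inverse-function/L'Hôpital identity for (1) and (2), the monotonicity sandwich $\frac{1-2\kappa}{\sqrt{1-\kappa}}\,t\le g(t)t\le G(t)\le t$ for (3), and a direct piecewise computation for (4). You in fact supply the two $\kappa$-uniform verifications ($36\kappa^2-35\kappa+8\ge 0$ on $(0,\tfrac13)$, and $\tfrac{\kappa}{1-\kappa}<\tfrac12$) that the paper leaves implicit behind ``since $\kappa<\tfrac13$'' and ``by direct calculation.''
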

\begin{proof} By the definition of $g$ and since $\kappa<{\dfrac{1}{3}}$, 
$$\lim\limits_{t\rightarrow 0}\dfrac{G^{-1}(t)}{t}=\lim\limits_{t\rightarrow 0}\dfrac{1}{g(G^{-1}(t))}=1$$
and
$$\lim\limits_{t\rightarrow \infty}\dfrac{G^{-1}(t)}{t}=\lim\limits_{t\rightarrow \infty}\dfrac{1}{g(G^{-1}(t))}=\dfrac{\sqrt{1-\kappa}}{1-2\kappa}\leq3.$$ Thus, (1) and (2) are proved. Since $g(t)>0$ is decreasing in $[0,\infty)$, then $\dfrac{1}{3}t\leq \frac{1-2\kappa}{\sqrt{1-\kappa}}t\leq g(t)t\leq G(t)\leq t$ for all $t\geq0$, which implies  (3).
 By direct calculation,  we get (4).

\end{proof}

Next, we consider the equation 
\begin{eqnarray} \label{1.00+}
-\Delta v+V(x)\frac{G^{-1}(v)}{g(G^{-1}(v))}-\frac{f(G^{-1}(v))}{g(G^{-1}(v))}=0 \,\,\,\, \mbox{in} \,\,\,\, \mathbb{R}^N.
\end{eqnarray}
We establish the geometric hypotheses of the Mountain Pass Theorem for the following energy functional corresponding to (\ref{1.00+}):
\begin{equation}\label{4.7}
\widetilde{J}_\kappa(v)=\frac{1}{2}\int_{\mathbb{R}^N} |\nabla
v|^2dx+\frac{1}{2}\int_{\mathbb{R}^N}
V(x)|G^{-1}(v)|^2dx-\int_{\mathbb{R}^N}
F(G^{-1}(v))dx.\end{equation}
 From  Lemma \ref{l4.1} and $(f_1)$$-$$(f_3)$,  $\widetilde{J}_\kappa$ is well defined in $H^1(\mathbb{R}^N)$ and $\widetilde{J}_\kappa\in C^1(H^{1}(\mathbb{R}^{N}),\mathbb{R}).$
\begin{lemma}  \label{l4.2}
  \begin{itemize}\item [$(1)$] There exist $\rho_1,$ $ a_1>0$, such that
$\widetilde{J}_\kappa(v)\geq a_1$ for
$\|v\|=\rho_1.$
\item [$(2)$] There exists $\phi\in H^1(\mathbb{R}^N)$ such that $\widetilde{J}_\kappa(\phi)<0$.
\end{itemize}
\end{lemma}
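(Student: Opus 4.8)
The plan is to follow exactly the strategy used for $J_\kappa$ in Lemma \ref{L2.1}, exploiting the two-sided control $t\le G^{-1}(t)\le 3t$ from Lemma \ref{l4.1}$-$(3) together with the growth estimates $(f_3)$ and $(f_6)$. The geometry is the standard mountain pass geometry for a superlinear problem, so the two parts amount to a lower estimate of $\widetilde{J}_\kappa$ on a small sphere and the location of a point where $\widetilde{J}_\kappa$ is negative.

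For part (1) I would bound the quadratic part from below and the nonlinear part from above. Since $G^{-1}$ is odd and increasing, Lemma \ref{l4.1}$-$(3) gives $|G^{-1}(v)|\ge|v|$, so
\[
\frac12\int_{\mathbb{R}^N}|\nabla v|^2\,dx+\frac12\int_{\mathbb{R}^N}V(x)|G^{-1}(v)|^2\,dx\ge\frac12\|v\|^2 .
\]
For the last term, $(f_6)$ yields $F(s)\le\frac{7}{q}s^q$ for $s\ge0$, while $F(s)=0$ for $s\le0$ because $f$ vanishes there; combined with $G^{-1}(v)\le 3v$ on $\{v>0\}$ this gives $\int_{\mathbb{R}^N}F(G^{-1}(v))\,dx\le\frac{7\cdot3^q}{q}\|v\|_q^q$. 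The Sobolev embedding $H^1(\mathbb{R}^N)\hookrightarrow L^q(\mathbb{R}^N)$ for $2<q<2^*$ then produces
\[
\widetilde{J}_\kappa(v)\ge\frac12\|v\|^2-C\|v\|^q ,
\]
and since $q>2$ the right-hand side is positive and bounded below by some $a_1>0$ once $\|v\|=\rho_1$ is chosen small enough.

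For part (2) I would fix $\varphi\in C_0^\infty(\mathbb{R}^N,[0,1])$ with $\operatorname{supp}\varphi=\bar B_1$ and show $\widetilde{J}_\kappa(t\varphi)\to-\infty$ as $t\to\infty$, taking $\phi=t\varphi$ for $t$ large. The quadratic terms are bounded above using $G^{-1}(t\varphi)\le 3t\varphi$ and $(V_1)$ (so $V\le V_\infty$), giving a contribution $\le C_2t^2$. For the nonlinear term I would pass from the asymptotics $(f_3)$ — equivalently from the explicit formula $f(s)=\frac78 s^{q-1}$ on $[1,\infty)$ — to the pointwise lower bound $F(s)\ge\frac{7}{8q}s^q-C_1$ valid for all $s\ge0$; together with $G^{-1}(t\varphi)\ge t\varphi$ this yields $\int_{\mathbb{R}^N}F(G^{-1}(t\varphi))\,dx\ge C_3t^q-C_1|B_1|$. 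Hence
\[
\widetilde{J}_\kappa(t\varphi)\le C_2t^2-C_3t^q+C_1|B_1| ,
\]
which tends to $-\infty$ because $q>2$. The estimates are routine once the directions of the inequalities in Lemma \ref{l4.1}$-$(3) are tracked correctly; the only point demanding a little care is the lower bound on $F$ in part (2), where one must turn the asymptotic relation $(f_3)$ into a clean pointwise inequality that still holds on the region $0\le s\le1$, which is arranged by enlarging the constant $C_1$.
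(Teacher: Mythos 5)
Your proof is correct, and it follows the same overall strategy as the paper (mountain pass geometry via the two-sided bounds $t\le G^{-1}(t)\le 3t$), but it handles the nonlinear term differently in both parts. For part (1) the paper does not use $(f_6)$; it uses $(f_2)$--$(f_3)$ to split $|F(t)|\le \frac{V_0}{4}t^2+C|t|^q$ and absorbs the quadratic piece into the potential term, ending with $\widetilde{J}_\kappa(v)\ge\frac14\|v\|^2-C\|v\|^q$, whereas your direct use of $(f_6)$ gives the cleaner $F(s)\le\frac{7}{q}s^q$ and the constant $\frac12$; both suffice, and the paper's version is the one that generalizes to nonlinearities that are not power-bounded near the origin. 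For part (2) the paper invokes only the superquadraticity $(f_4)$, i.e. $F(t)\ge At^2-D$ with $A$ chosen in terms of $\varphi$, so the negative term beats the quadratic terms at the same power $t^2$; you instead extract the genuine $q$-power lower bound $F(s)\ge\frac{7}{8q}s^q-C_1$ from the explicit formula for $f$ on $[1,\infty)$, which is more specific to this $f$ but avoids having to calibrate the constant $A$ against $\varphi$. Your observation that the bound on $[0,1]$ is recovered by enlarging $C_1$ is exactly the needed care, and your treatment of the sign of $G^{-1}$ and of $F$ on $(-\infty,0]$ is consistent with the definitions; the argument is complete.
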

\begin{proof}
By $(f_2)$ and $(f_3)$,
$$|F(t)|\leq  \frac{1}{4}V_0 |t|^2+C|t|^q.$$
Thus, by Lemma \ref{l4.1}$-$(3) and Sobolev embedding inequality,  we have
 \begin{equation*}
\begin{split}
\widetilde{J}_\kappa(v)=&\frac{1}{2}\int_{\mathbb{R}^N} |\nabla
v|^2dx+\frac{1}{2}\int_{\mathbb{R}^N}
 V(x)|G^{-1}(v)|^2dx- \int_{\mathbb{R}^N}
F(G^{-1}(v))dx\\
\geq&  \frac{1}{2}\int_{\mathbb{R}^N} |\nabla
v|^2dx+\frac{1}{2}\int_{\mathbb{R}^N}
 V(x)|G^{-1}(v)|^2dx- \frac{1}{4}V_0\int_{\mathbb{R}^N}
|G^{-1}(v)|^2dx\\&-C\int_{\mathbb{R}^N}
|G^{-1}(v)|^qdx\\
\geq&\frac{1}{2}\int_{\mathbb{R}^N} |\nabla
v|^2dx+\frac{1}{4}V_0\int_{\mathbb{R}^N}v^2dx-C \int_{\mathbb{R}^N}
|v|^{q}dx\\
\geq& \frac{1}{4}\|v\|^2-C\|v\|^{q}.
\end{split}\end{equation*}
Therefore, by choosing $\rho_1$ small, we get (1) for $\|v\|=\rho_1.$

To prove (2),  we choose some $\varphi \in C_0^\infty(\mathbb{R}^N,[0,1]) \setminus \{0\}$
with $supp \varphi=\bar{B}_1(0)$. We will show that
$\widetilde{J}_\kappa(t\varphi)\rightarrow -\infty$ as $t\rightarrow \infty$, which will
prove the result if we take $\phi=t\varphi$ with $t$ large enough. By $(f_4)$,
$$
\lim\limits_{t\rightarrow +\infty}\dfrac{F(t)}{t^2}=+\infty.
$$
Thus, given $A=\frac{9}{2}V_\infty \int_{\mathbb{R}^N}\varphi^2dx + \frac{1}{2}\int_{\mathbb{R}^N} |\nabla
\varphi|^2dx +1 $, there exists $D>0$ such that
$$
F(t) \geq At^{2} - D \,\,\ \forall t \geq 0.
$$
Hence,
\[
\widetilde{J}_\kappa(t \varphi)\leq - t^2 + D|B_1(0)| \rightarrow  -\infty \,\,\, \mbox{as} \,\,\, t \to +\infty,
\]
which implies (2).
\end{proof}

In consequence of Lemma \ref{l4.1} and of a special version of the Mountain Pass Theorem found in \cite{Ek}, for the constant
\begin{eqnarray}\label{cc}
\tilde{c}_\kappa=\inf\limits_{\gamma \in \tilde{\Gamma}_\kappa}\sup\limits_{t\in [0,1]}\widetilde{J}_\kappa(\gamma(t))\geq a_0>0,
\end{eqnarray} where 
$$
\tilde{\Gamma}_\kappa=\{\gamma\in C([0,1],H^1(\mathbb{R}^N)): \gamma(0)=0, \gamma(1)\not=0, \widetilde{J}_\kappa(\gamma(1))<0\},
$$ 
there exists a Cerami sequence at level $\tilde{c}_\kappa$, that is,
\begin{equation} \label{Z1}
\widetilde{J}_\kappa(v_n)\rightarrow c \,\,\, \mbox{and} \,\,\, (1+\|v_n\|)\| \widetilde{J}_\kappa'(v_n)\| \rightarrow 0.
\end{equation}

\begin{lemma}
The Cerami sequence $\{v_n\}$ given in (\ref{Z1}) is bounded.
\end{lemma}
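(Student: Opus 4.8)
The plan is to reproduce, almost verbatim, the boundedness argument used for the subcritical problem in Lemma \ref{L3.3}, the one genuinely new point being that the nonlinearity $f$ is no longer a pure power, so the exact cancellation exploited there must be replaced by a sign estimate that is exactly what the restriction $2<q<\frac{14}{5}$ provides. Concretely, I would test $\widetilde{J}_\kappa'(v_n)$ against $\psi_n=G^{-1}(v_n)\,g(G^{-1}(v_n))$ and analyse the combination $q\widetilde{J}_\kappa(v_n)-\widetilde{J}_\kappa'(v_n)\psi_n$.

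First I would verify that $\psi_n\in H^1(\mathbb{R}^N)$ with $\|\psi_n\|\le C\|v_n\|$, just as in (\ref{3.2h})--(\ref{3.2hhh}): Lemma \ref{l4.1}-(3) gives $|\psi_n|\le 3|v_n|$, while Lemma \ref{l4.1}-(4) gives $\big|1+\tfrac{G^{-1}(v_n)}{g(G^{-1}(v_n))}g'(G^{-1}(v_n))\big|\le 1$, hence $|\nabla\psi_n|\le|\nabla v_n|$. This is where the Cerami structure (\ref{Z1}) pays off: since $(1+\|v_n\|)\|\widetilde{J}_\kappa'(v_n)\|\to 0$, the bound $\|\psi_n\|\le C\|v_n\|$ yields $\widetilde{J}_\kappa'(v_n)\psi_n=o(1)$, and not merely $o(1)\|v_n\|$. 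Inserting $\psi_n$ and using $\tfrac{G^{-1}(v)}{g(G^{-1}(v))}\psi_n=|G^{-1}(v)|^2$ together with $\tfrac{f(G^{-1}(v))}{g(G^{-1}(v))}\psi_n=f(G^{-1}(v))G^{-1}(v)$, a direct computation gives
\begin{align*}
q\widetilde{J}_\kappa(v_n)-\widetilde{J}_\kappa'(v_n)\psi_n
=&\int_{\mathbb{R}^N}\Big(\tfrac{q}{2}-1-\tfrac{G^{-1}(v_n)}{g(G^{-1}(v_n))}g'(G^{-1}(v_n))\Big)|\nabla v_n|^2\,dx\\
&+\tfrac{q-2}{2}\int_{\mathbb{R}^N}V(x)|G^{-1}(v_n)|^2\,dx\\
&+\int_{\mathbb{R}^N}\big[f(G^{-1}(v_n))G^{-1}(v_n)-qF(G^{-1}(v_n))\big]\,dx.
\end{align*}
By Lemma \ref{l4.1}-(4) the gradient coefficient is $\ge\tfrac{q-2}{2}$, and since $|G^{-1}(v_n)|\ge|v_n|$ the first two integrals together dominate $\tfrac{q-2}{2}\|v_n\|^2$.

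The main obstacle is the last integral, the analogue of the term that vanished identically for the pure power $|u|^{q-2}u$. I would show that its integrand $h(s):=f(s)s-qF(s)$ is nonnegative for every $s\in\mathbb{R}$, which splits into three regimes. For $s\le 0$ one has $f\equiv 0$, so $h\equiv 0$. For $s\ge 1$ the explicit branch $f(s)=\tfrac{7}{8}s^{q-1}$ gives $h(s)\equiv\tfrac{7}{8}-qF(1)=\tfrac{14-5q}{16}$, which is $\ge 0$ \emph{precisely} because $q<\tfrac{14}{5}$; this is the role of that restriction. For $0\le s\le 1$, substituting $F(s)=\tfrac{s^2}{2}-\tfrac14\big(1-(1+s^2)^{-2}\big)$ and estimating on the compact interval shows $h(s)\ge 0$ there as well (with $h(0)=0$ and $h(1)=\tfrac{14-5q}{16}$ matching the outer branch). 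Granting $h\ge 0$, the last integral is nonnegative, so
\[
\tfrac{q-2}{2}\|v_n\|^2\le q\widetilde{J}_\kappa(v_n)-\widetilde{J}_\kappa'(v_n)\psi_n=q\tilde{c}_\kappa+o(1),
\]
whence $\|v_n\|^2\le \tfrac{2q\tilde{c}_\kappa}{q-2}+o(1)$ and $\{v_n\}$ is bounded. I expect the verification of $h\ge 0$ on $[0,1]$ to be the only delicate point; should strict nonnegativity there ever fail, the weaker pointwise bound $|h(s)|\le C s^2$ near $s=0$ combined with $V_0\ge 1$ would absorb the resulting $-C\int v_n^2$ into the coercive potential term, but the sign computation is the cleaner route.
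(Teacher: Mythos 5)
Your proposal is correct, and it starts exactly where the paper does (test $\widetilde{J}_\kappa'(v_n)$ against $\psi_n=G^{-1}(v_n)g(G^{-1}(v_n))$, use the Cerami structure to make $\widetilde{J}_\kappa'(v_n)\psi_n=o(1)$, and expand $q\widetilde{J}_\kappa(v_n)-\widetilde{J}_\kappa'(v_n)\psi_n$), but it finishes differently. The paper does \emph{not} prove your pointwise inequality $h(s)=f(s)s-qF(s)\ge 0$; instead it sacrifices the quadratic part of $h$ on $\{|G^{-1}(v_n)|\le 1\}$ by absorbing $-\frac{q-2}{2}|G^{-1}(v_n)|^2$ into $\frac{q-2}{2}(V(x)-1)|G^{-1}(v_n)|^2$ (this is where the standing hypothesis $V_0\ge 1$ of Theorem \ref{th1.2} is used), retains only $\limsup\|\nabla v_n\|_2^2\le \frac{2q\tilde{c}_\kappa}{q-2}$, and then runs a second, separate step: Sobolev's inequality bounds $\|v_n\|_{2^*}$, and testing the equation with $v_n$ together with $f(t)\le \frac{V_0}{18}t+Ct^{2^*-1}$ bounds $\|v_n\|_2$. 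Your route is shorter and gives the full $H^1$ bound $\|v_n\|^2\le \frac{2q\tilde{c}_\kappa}{q-2}+o(1)$ in one stroke, without invoking $V_0\ge 1$ at this stage; its price is the verification of $h\ge 0$ on $[0,1]$, which you only sketch. That inequality is in fact true for $2<q\le \frac{14}{5}$: writing $h(s)=s^2\phi(s^2)$ with $\phi(u)=-\frac{q-2}{2}+\frac{q(2+u)}{4(1+u)^2}-\frac{1}{(1+u)^3}$, one checks $\phi(0)=0$, $\phi(1)=\frac{14-5q}{16}\ge 0$, and $\phi'(u)=(1+u)^{-4}\big[3-\frac{q}{4}(3+u)(1+u)\big]$ changes sign exactly once (from $+$ to $-$) on $[0,1]$, so $\phi\ge\min\{\phi(0),\phi(1)\}\ge 0$ there; the regimes $s\le 0$ and $s\ge 1$ are as you say. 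With that computation supplied, your argument is complete and arguably cleaner than the paper's two-step version, though the paper's version avoids any delicate pointwise analysis on the transition interval.
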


\begin{proof} For any $v \in H^{1}(\mathbb{R}^{N})$, 
\begin{equation}\label{4.8.1+++}\begin{split}
\widetilde{J}_\kappa(v)=&\frac{1}{2}\int_{\mathbb{R}^N} |\nabla
v|^2dx+\frac{1}{2}\int_{\mathbb{R}^N}
V(x)|G^{-1}(v)|^2dx\\&-\int_{\{x\in\mathbb{R}^N,|G^{-1}(v(x))|\leq1\}}
\Big[\frac{G^{-1}(v)^2}{2}+\frac{1}{4(1+G^{-1}(v)^2)^2}-\frac{1}{4}\Big]dx\\
&-\frac{7}{8q}\int_{\{x\in\mathbb{R}^N,|G^{-1}(v(x))|\geq1\}}
|G^{-1}(v)|^qdx-\dfrac{(5q-14)}{16q}\int_{\{x\in\mathbb{R}^N,|G^{-1}(v(x))|\geq1\}}
dx
\end{split}\end{equation}
and
\begin{equation}\label{4.9.1++}\begin{split}
\mbox{} &\widetilde{J}_\kappa'(v)G^{-1}(v)g(G^{-1}(v))=\\&\int_{\mathbb{R}^N}\Big[1+\dfrac{G^{-1}(v)}{g(G^{-1}(v))}g'(G^{-1}(v))\Big] |\nabla
v|^2dx+\int_{\mathbb{R}^N}
V(x)|G^{-1}(v)|^2dx\\&-\int_{\{x\in\mathbb{R}^N,|G^{-1}(v(x))|\leq1\}}
\Big[G^{-1}(v)^2-\frac{G^{-1}(v_n)^2}{(1+G^{-1}(v)^2)^3}\Big]dx\\
&-\frac{7}{8}\int_{\{x\in\mathbb{R}^N,|G^{-1}(v(x))|\geq1\}}
|G^{-1}(v)|^qdx
.\end{split}\end{equation}

Now, by previous arguments, we know that there is $C>0$ such that
\[
\|G^{-1}(v_n)g(G^{-1}(v_n))\| \leq C \|v_n\| \,\,\, \forall n \in \mathbb{N}.
\]
Thus, the last inequality combined with  (\ref{Z1}), (\ref{4.8.1+++}) and (\ref{4.9.1++}) implies that
\begin{equation*}\label{4.10++}\begin{split}
&q\tilde{c_\kappa}+o_n(1)\\=&q\widetilde{J}_\kappa(v_n)-\widetilde{J}_\kappa'(v_n)G^{-1}(v)g(G^{-1}(v_n))\\\geq &\int_{\mathbb{R}^N}\Big[\frac{(q-2)}{2}-\dfrac{G^{-1}(v_n)}{g(G^{-1}(v_n))}g'(G^{-1}(v_n))\Big] |\nabla
v_n|^2dx+\\&+\frac{(q-2)}{2}\int_{\{x\in\mathbb{R}^N,|G^{-1}(v_n(x))|\leq1\}}
(V(x)-1)G^{-1}(v_n)^2dx\\&+\int_{\{x\in\mathbb{R}^N,|G^{-1}(v_n(x))|\leq1\}}
\Big[\frac{2q|G^{-1}(v_n)|^2+q|G^{-1}(v_n)|^4}{4(1+G^{-1}(v_n)^2)^2}-\frac{G^{-1}(v_n)^2}{(1+G^{-1}(v_n)^2)^3}\Big]dx\\
\geq& \frac{(q-2)}{2} \int_{\mathbb{R}^N} |\nabla
v_n|^2dx.\end{split}\end{equation*}
Once that $V(x) \geq  1$ for all $x \in \mathbb{R}^{N}$, $q>2$ and 
$$
\frac{2qt^2+qt^4}{4(1+t^2)^2}-\frac{t^2}{(1+t^2)^3} \geq 0\,\,\, \forall t \in [0,1],
$$
it follows that 
\begin{equation} \label{Z0}
\limsup_{n \to \infty}\|\nabla v_n\|_2^{2}\leq\dfrac{2q\tilde{c}_\kappa}{q-2}.
\end{equation}
Recalling that there is $S>0$ such that
\[
\int_{\mathbb{R}^N}|v|^{2^{*}} \leq S \left(\int_{\mathbb{R}^N}|\nabla v|^{2}dx \right)^{\frac{2^{*}}{2}} \,\,\, \forall v \in H^{1}(\mathbb{R}^N),
\]we derive that
\begin{equation} \label{Z2}
\limsup_{n \to +\infty}\int_{\mathbb{R}^N}|v_n|^{2^{*}} \leq \left( \dfrac{2q\tilde{c}_\kappa}{q-2}\right)^{2^{*}} \,\, \forall n \in \mathbb{N}.
\end{equation}
From definition of $f$, given $\epsilon=\frac{V_0}{2}$, there is $C>0$ such that
$$
f(t) \leq \frac{V_0}{18}t + Ct^{2^{*}-1} \,\,\, \forall t \geq 0.
$$
This together with $J'(v_n)v_n=o_n(1)$ gives
\[
\int_{\mathbb{R}^N} \Big[|\nabla v_n|^{2} + V(x)\frac{G^{-1}(v_n)}{g(G^{-1}(v_n))}v_n\Big]dx\leq \int_{\mathbb{R}^N}\Big[\frac{\frac{V_0}{18}(G^{-1}(v_n))+C(G^{-1}(v_n))^{2^{*}-1}}{g(G^{-1}(v_n))}\Big]v_n dx.
\]
Using Lemma \ref{l4.1}-3 and the fact that  $ \frac{1}{3} \leq g(t) \leq 1$ for all $t \in \mathbb{R}$, we get
\[
\frac{V_0}{2}\int_{\mathbb{R}^N}|v_n|^{2}dx \leq C \int_{\mathbb{R}^N}|v_n|^{2^{*}}.
\]
Then, by (\ref{Z2}),
\begin{equation} \label{Z3}
\limsup_{n \to \infty} \int_{\mathbb{R}^N}|v_n|^{2}dx \leq \frac{2C}{V_0} \left( \dfrac{2q\tilde{c}}{q-2}\right)^{2^{*}} \,\, \forall n \in \mathbb{N}.
\end{equation}
From (\ref{Z0}) and (\ref{Z2}), it follows that $\{v_n\}$ is bounded in $H^{1}(\mathbb{R}^{N})$.

\end{proof}

Analogous to the arguments in the end of Section 2,  we can assume there is $v_\kappa \in H^{1}(\mathbb{R}^{N})$ and a subsequence of $\{v_n\}$, still denoted by itself, such that
\[
v_n \rightharpoonup v_\kappa \,\, \mbox{in} \,\, H^{1}(\mathbb{R}^{N}), v_n \to v_\kappa \,\, \mbox{in} \,\, H_{loc}^{1}(\mathbb{R}^{N}) \,\,\, \mbox{and} \,\,\, v_n \to v_\kappa \,\, \mbox{in} \,\, L_{loc}^{p}(\mathbb{R}^N) \,\,\, \forall p \in [1,2^{*}).
\]
Moreover, $v_\kappa$ is a nontrivial critical point for $\widetilde{J}_\kappa$, $\widetilde{J}_\kappa(v_\kappa) \leq \tilde{c}_\kappa$ and by (\ref{Z0}),
\begin{equation} \label{Z5}
\|\nabla v_\kappa \|_2^{2}\leq\dfrac{2q\tilde{c}_\kappa}{q-2}.
\end{equation}

\begin{lemma}\label{L2.5} The nontrivial solution $v_\kappa$ of (\ref{1.00+}) verifies $\|\nabla v_\kappa\|_2\leq \dfrac{2qd_\infty}{q-2}$, where $d_\infty$ is independent on $\kappa>0$.
\end{lemma}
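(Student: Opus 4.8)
The plan is to transplant to the setting of Theorem \ref{th1.2} the comparison-functional device that produced (\ref{Z6}) in Section 3. Since (\ref{Z5}) already gives $\|\nabla v_\kappa\|_2^{2}\leq \frac{2q\tilde{c}_\kappa}{q-2}$, the only thing left to prove is that the mountain pass level $\tilde{c}_\kappa$ admits an upper bound that is independent of $\kappa$. To this end I would introduce the $\kappa$-free functional
\[
P_\infty(v)=\frac{1}{2}\int_{\mathbb{R}^N}|\nabla v|^2\,dx+\frac{9}{2}V_\infty\int_{\mathbb{R}^N}v^2\,dx-\int_{\mathbb{R}^N}F(v)\,dx,
\]
the associated path class ${\Gamma}^\infty=\{\gamma\in C([0,1],H^1(\mathbb{R}^N)):\gamma(0)=0,\ \gamma(1)\neq0,\ P_\infty(\gamma(1))<0\}$, and the level $d_\infty=\inf_{\gamma\in{\Gamma}^\infty}\sup_{t\in[0,1]}P_\infty(\gamma(t))$, which by construction does not depend on $\kappa$.

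The decisive step is the pointwise domination $\widetilde{J}_\kappa(v)\leq P_\infty(v)$ for all $v\in H^1(\mathbb{R}^N)$. For the quadratic terms I would invoke Lemma \ref{l4.1}-(3) in the form $|G^{-1}(v)|^2\leq 9v^2$ together with $(V_1)$, giving $\frac{1}{2}\int V(x)|G^{-1}(v)|^2\,dx\leq \frac{9}{2}V_\infty\int v^2\,dx$, while leaving the gradient term untouched. For the nonlinear term I would use that $f\geq0$ on $[0,\infty)$ and $f\equiv0$ on $(-\infty,0]$, so that $F$ vanishes on $(-\infty,0]$ and is nondecreasing on $[0,\infty)$; combined with $G^{-1}(v)\geq v$ for $v\geq0$ (again Lemma \ref{l4.1}-(3)), this forces $F(G^{-1}(v))\geq F(v)$ pointwise, hence $-\int F(G^{-1}(v))\,dx\leq -\int F(v)\,dx$. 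Summing the two estimates yields $\widetilde{J}_\kappa(v)\leq P_\infty(v)$.

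The conclusion is then routine. The inclusion ${\Gamma}^\infty\subset\tilde{\Gamma}_\kappa$ follows because $P_\infty(\gamma(1))<0$ implies $\widetilde{J}_\kappa(\gamma(1))\leq P_\infty(\gamma(1))<0$; that ${\Gamma}^\infty$ is nonempty is obtained exactly as in Lemma \ref{l4.2}-(2), where $(f_4)$ makes $F$ superquadratic and hence $P_\infty(t\varphi)\to-\infty$. Restricting the infimum to the smaller class and using the domination along each path gives
\[
\tilde{c}_\kappa=\inf_{\gamma\in\tilde{\Gamma}_\kappa}\sup_{t\in[0,1]}\widetilde{J}_\kappa(\gamma(t))\leq \inf_{\gamma\in{\Gamma}^\infty}\sup_{t\in[0,1]}P_\infty(\gamma(t))=d_\infty,
\]
and substituting this into (\ref{Z5}) delivers the claimed estimate with $d_\infty$ independent of $\kappa$.

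The hard part will be the nonlinear domination $F(G^{-1}(v))\geq F(v)$: one must verify carefully that $F$ is monotone on $[0,\infty)$ and control the sign changes so that the comparison holds on all of $\mathbb{R}^N$, not merely where $v>0$. By contrast, the quadratic comparison and the inclusion of path classes are immediate consequences of the already established Lemma \ref{l4.1} and of the superquadratic growth recorded in the proof of Lemma \ref{l4.2}.
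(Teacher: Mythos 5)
Your proposal is correct and follows essentially the same route as the paper: the paper introduces the identical $\kappa$-free comparison functional (called $Q_\infty$ there), observes $\widetilde{J}_\kappa(v)\leq Q_\infty(v)$, deduces $\tilde{c}_\kappa\leq d_\infty$, and substitutes into (\ref{Z5}). The only difference is that you spell out the pointwise domination of the nonlinear term via the monotonicity of $F$ and $G^{-1}(v)\geq v$, a detail the paper leaves implicit; your verification of it is sound.
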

\begin{proof}
Setting the functional $Q_\infty : H^{1}(\mathbb{R}^{N}) \to \mathbb{R}$ given by
\begin{eqnarray*}\label{3.5}\begin{split}Q_\infty(v)=\frac{1}{2}\int_{\mathbb{R}^N} |\nabla
v|^2dx+\frac{9V_\infty}{2}\int_{\mathbb{R}^N}
|v|^2dx-\int_{\mathbb{R}^N}F(v)dx,
\end{split}\end{eqnarray*}
we observe that
\[
\widetilde{J}_\kappa(v) \leq Q_\infty(v) \,\,\, \forall v \in H^{1}(\mathbb{R}^{N}).
\]
As in Lemma \ref{l4.2}, it follows that $Q_\infty$ verifies the mountain pass geometry. Therefore, if $d_\infty$ denotes the mountain pass level associated with $Q_\infty$, the last inequality yields $\tilde{c}_\kappa \leq d_\infty$. Hence, by (\ref{Z5})
\[
\|\nabla v_\kappa \|_2^{2}\leq\dfrac{2qd_\infty}{q-2},
\]
finishing the proof.  \end{proof}

\begin{proposition} \label{PP4}
There exists a constant $C_1>0$ independent on $\kappa$ such that  $\|v_\kappa\|_\infty\leq C_1 \kappa^{\frac{1}{2(2^*-q)}}$.
\end{proposition}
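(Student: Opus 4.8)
The plan is to reproduce the Moser iteration of Proposition~\ref{P4} verbatim in its structure, now for the weak formulation of (\ref{1.00+}) with the modified $g$ and $f$ introduced in this section. Writing $v=v_\kappa$ and fixing $\beta>1$, I would keep the truncation of Proposition~\ref{P4}: set $A_m=\{x:|v|^{\beta-1}\le m\}$, $B_m=\mathbb{R}^N\setminus A_m$, use the test function $v_m$ (equal to $v|v|^{2(\beta-1)}$ on $A_m$ and $m^2v$ on $B_m$) and the auxiliary function $w_m$. Inserting $v_m$ into (\ref{1.00+}) and combining the gradient identities for $v_m$ and $w_m$ exactly as in (\ref{s3})--(\ref{s5}), the nonnegativity of the potential contribution and the elementary inequality $\tfrac{(\beta-1)^2}{2\beta-1}+1\le\beta^2$ (valid for $\beta\ge1$) give the Caccioppoli-type estimate
\[
\int_{\mathbb{R}^N}|\nabla w_m|^2\,dx\le\beta^2\int_{\mathbb{R}^N}\frac{f(G^{-1}(v))}{g(G^{-1}(v))}\,v_m\,dx,
\]
which is the direct analogue of the inequality preceding (\ref{s7}) with the pure power replaced by $f(G^{-1}(v))$.

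Next I would estimate the right-hand side to recover the recursion (\ref{s7}). Using $(f_6)$, namely $f(t)\le 7t^{q-1}$ for $t\ge0$, the bound $G^{-1}(v)\le 3v$ from Lemma~\ref{l4.1}-(3), and the lower bound on the modified $g$, the nonlinear integral is dominated by a constant times $\int_{\mathbb{R}^N}|v|^{q-2}w_m^2\,dx$, the constant carrying the power of $\kappa$ that will govern the final exponent. Applying the Sobolev inequality on the left and H\"older on the right with $1/q_1+(q-2)/2^*=1$, and then letting $m\to\infty$ by monotone convergence, I reach the recursion (\ref{s7}) with $\sigma=2^*/(2q_1)=(2^*-q+2)/2$, so that $\sigma-1=(2^*-q)/2$.

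Iterating with $\beta=\sigma^i$ and summing the geometric series $\sum_i i\sigma^{-i}$ and $\sum_i\sigma^{-i}$ as in Proposition~\ref{P4}, the exponent attached to the nonlinear coefficient is $\tfrac{1}{2(\sigma-1)}=\tfrac{1}{2^*-q}$. I would then feed in the $\kappa$-independent a priori bound $\|v_\kappa\|_{2^*}\le S^{1/2}\|\nabla v_\kappa\|_2\le C$, which follows from (\ref{Z5}), the inequality $\tilde c_\kappa\le d_\infty$, and the Sobolev embedding. This closes the iteration in a bound of the form $\|v_\kappa\|_\infty\le C_1\kappa^{s}$ with $C_1$ independent of $\kappa$, and matching $s=\tfrac{1}{2(2^*-q)}$ reduces to checking that the $\kappa$-power inside the coefficient, once raised to $\tfrac{1}{2^*-q}$, equals $\kappa^{1/(2(2^*-q))}$.

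The main obstacle is exactly this quantitative point rather than the (by now routine) iteration machinery. One must exploit the modification of $g$ and $f$ sharply enough to expose a genuine positive power of $\kappa$ in the coefficient, reflecting the fact that the solutions must shrink as $\kappa\to0$ (indeed, for $\kappa=0$ the hypothesis $V_0\ge1$ together with $\rho(t)/t<1$ forces only the trivial solution of (\ref{1.1++})). A secondary subtlety is that one cannot assume a priori that $|G^{-1}(v_\kappa)|\le1$, so the full modified $g$ and $f$ must be carried through the entire estimate; the resulting bound, being $o(1)$ as $\kappa\to0$, is precisely what will later yield $\|G^{-1}(v_\kappa)\|_\infty\le 3\|v_\kappa\|_\infty\le1$ and return us to the unmodified equation (\ref{1.1++}).
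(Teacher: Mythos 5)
Your proposal reproduces the paper's argument almost verbatim in structure: the same truncations $v_m$ and $w_m$, the same Caccioppoli inequality $\int|\nabla w_m|^2\,dx\le\beta^2\int f(G^{-1}(v))v_m/g(G^{-1}(v))\,dx$, the same recursion with $\sigma=(2^*-q+2)/2$, and the same $\kappa$-independent control of $\|v_\kappa\|_{2^*}$ through $\tilde c_\kappa\le d_\infty$ and (\ref{Z5}). Your exponent bookkeeping is also correct: the claimed bound follows once the coefficient multiplying $\int|v|^{q-2}w_m^2\,dx$ carries a factor $\kappa^{1/2}$, since $\tfrac{1}{2(\sigma-1)}=\tfrac{1}{2^*-q}$. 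But that is precisely the one step you do not carry out. You write that ``one must exploit the modification of $g$ and $f$ sharply enough to expose a genuine positive power of $\kappa$'' and label it the main obstacle, without resolving it. Since this is the only point at which the present proposition differs from Proposition \ref{P4}, your proposal is essentially incomplete where it matters. Worse, the route you gesture at --- ``the lower bound on the modified $g$'' --- gives $1/g(t)\le\sqrt{1-\kappa}/(1-2\kappa)$, a quantity that tends to $1$ as $\kappa\to0^+$; used naively it yields a coefficient of order one and hence only a $\kappa$-independent $L^\infty$ bound, not the decay $\kappa^{1/(2(2^*-q))}$.

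For comparison, the paper extracts the factor by asserting $1/g(t)\le\sqrt{1-\kappa}/(1-2\kappa)\le 3\sqrt{2\kappa}$ for $0<\kappa\le\tfrac13$, which injects $\kappa^{1/2}$ into the coefficient $3^{q}\sqrt{2}S\beta^{2}\kappa^{1/2}$ and closes the computation. You should note, however, that your instinct to flag this as delicate is well founded: squaring, the second inequality reads $(1-\kappa)/(1-2\kappa)^2\le 18\kappa$, which holds near $\kappa=\tfrac13$ but fails as $\kappa\to0^+$ (the left side tends to $1$, the right side to $0$). Your supporting heuristic is also shaky: the nonexistence argument at $\kappa=0$ applies to the original nonlinearity $\rho(t)t$, not to the modified $f$, which equals $\tfrac78t^{q-1}$ for $t\ge1$ and does admit nontrivial mountain-pass solutions at $\kappa=0$, so there is no soft reason for $\|v_\kappa\|_\infty\to0$. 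In short, the iteration machinery in your proposal is sound and matches the paper, but the decisive quantitative estimate --- the only new content of this proposition relative to Proposition \ref{P4} --- is missing, and it cannot be supplied by the lower bound on $g$ alone.
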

\begin{proof} Since the proof  is similar to  Proposition \ref{P4}, we only indicate the necessary changes. Denoting $v_\kappa$ by $v$, we now have
\begin{eqnarray*}\begin{split}
\int_{\mathbb{R}^N}|\nabla w_m|^2dx\leq&\Big[\frac{(\beta-1)^2}{2\beta-1}+1\Big]\int_{\mathbb{R}^N}\nabla v\nabla v_mdx\\
\leq& \beta^2\int_{\mathbb{R}^N}\Big[\nabla v\nabla v_m +V(x) \frac{G^{-1}(v)}{g(G^{-1}(v))}v_m     \Big]dx\\
=&\beta^2\int_{\mathbb{R}^N}\frac{f(G^{-1}(v))}{g(G^{-1}(v))}v_mdx\\
\leq& {7}\beta^2\int_{\mathbb{R}^N}\frac{|G^{-1}(v)|^{q-1}}{g(G^{-1}(v))}v_mdx.
\end{split}\end{eqnarray*}
Since $\dfrac{1}{g(t)}\leq \dfrac{\sqrt{1-\kappa}}{1-2\kappa}\leq 3\sqrt{2\kappa}$ for $0<\kappa\leq \frac{1}{3}$.  Then, by Sobolev inequality and Lemma \ref{l2.1}$-$(3), we get
\begin{eqnarray*}\begin{split}
\Big(\int_{A_m}| w_m|^{2^*}dx\Big)^{(N-2)/N}\leq S\int_{\mathbb{R}^N}|\nabla w_m|^2dx\leq& 3^{q}\sqrt{2}S\beta^2\kappa^{\frac{1}{2}}\int_{\mathbb{R}^N}|v|^{q-2}w_m^2dx.
\end{split}\end{eqnarray*}
Therefore,
$$\|v\|_\infty\leq \sigma^{\frac{1}{(\sigma-1)^2}}(3^{q}\sqrt{2}S\beta^2\kappa^{\frac{1}{2}}S^{q/2}C^{(q-2)/2})^{\frac{1}{2(\sigma-1)}}S^{1/2}C^{1/2}=C_1\kappa^{\frac{1}{2(2^*-q)}}, $$
where $C_1>0$ is independent of $\kappa>0$, finishing the proof.
\end{proof}

\subsection{Proof of Theorem \ref{th1.2}. }
Combining the above arguments and  Proposition \ref{PP4}, the solution $v_\kappa$ of (\ref{1.0}) satisfies $\|v_\kappa\|_\infty\leq C_1\kappa^{\frac{1}{2(2^*-q)}}$. Fixing $\kappa_1=\min\Big\{\dfrac{1}{3},\left(\frac{1}{18C_1^{2}}\right)^{\frac{2^{*}-q}{2^{*}-q+1}}\Big\}$, we have that
\[
\|G^{-1}(v_\kappa)\|_{\infty}\leq \sqrt{6}\|v_\kappa\|_\infty < \sqrt{\dfrac{1}{3\kappa}} \,\,\ \forall \kappa \in [0, \kappa_1).
\]
This implies that $u=G^{-1}(v_\kappa)$ is a positive solution of (\ref{1.1++}).

\end{document}